\def\qed{\hfill$\Box$}
\newtheorem{construction}{Construction}
\newtheorem{defn}{\textbf{Definition}}
\newenvironment{statement}[1][Statement]{\noindent {\it #1}.}{}
\newcommand{\Mod}[1]{\ (\mathrm{mod}\ #1)}
\renewcommand{\@Opargbegintheorem}[4]{%
	#4\trivlist\item[\hskip\labelsep{#3#2\@thmcounterend}]}
\begin{document}

	\renewcommand\Authands{ and }
	\title{System of unbiased representatives for a collection of bicolorings}
%
	
	\author{Niranjan Balachandran$^1$\and
	Rogers Mathew$^2$\and
	Tapas Kumar Mishra$^2$ \and \\
	Sudebkumar Prasant Pal$^2$}
	
	\institute {Department of Mathematics, Indian Institute of Technology, Bombay 400076, India \email{niranj@iitb.ac.in}\and
	Department of Computer Science and Engineering, Indian Institute of Technology, Kharagpur 721302, India \\
	\email{\{rogers,tkmishra,spp\}@cse.iitkgp.ernet.in}}

	\maketitle
	\begin{abstract}
		
		%
		Let $\mathcal{B}$ denote a set of bicolorings of $[n]$,
		where each bicoloring is a mapping of the points in $[n]$ to $\{-1,+1\}$.
		For each $B \in \mathcal{B}$, let $Y_B=(B(1),\ldots,B(n))$.
		For each $A \subseteq [n]$, let $X_A \in \{0,1\}^n$ denote the incidence vector of $A$.
		A non-empty set $A$ is said to be an `unbiased representative' for a bicoloring $B \in \mathcal{B}$ if
		$\left\langle X_A,Y_B\right\rangle =0$.
		Given a set $\mathcal{B}$ of bicolorings, we study the minimum cardinality of a family $\mathcal{A}$ consisting of subsets of $[n]$ such that every bicoloring in $\mathcal{B}$ has an unbiased representative in $\mathcal{A}$.
	\end{abstract}
	%
	
	
	
	\section{Introduction}

	Let $\mathcal{B}$ denote a set of bicolorings of $[n]=\{1,\ldots,n\}$,
	where each bicoloring $B \in \mathcal{B}$ maps each point $x \in [n]$ to either -1 or +1.
	Let $Y_B$ denote the $n$-dimensional vector representing the bicoloring $B$, i.e. $Y_B=(B(1),\ldots,B(n))$.
	A non-empty set $A \subseteq [n]$ is said to be an \emph{unbiased representative} for a bicoloring $B \in \mathcal{B}$ 
	if $\left\langle X_A,Y_B\right\rangle =0$, where $X_A$ denotes the 0-1 $n$-dimensional incidence vector corresponding to $A$.
	We call a family $\mathcal{A}$ of subsets of $[n]$ a  \emph{system of unbiased representatives} (or `SUR') for 
	$\mathcal{B}$  if for every bicoloring $B  \in \mathcal{B}$, there exists at least one set $A \in \mathcal{A}$
	such that $\left\langle X_A,Y_B\right\rangle =0$.
	Note that the two monochromatic bicolorings can never have any unbiased representatives -
	we call these bicolorings `trivial'.
	Let $\gamma(\mathcal{B})$ denote the minimum cardinality of a system of unbiased representatives for $\mathcal{B}$.
	We define the maximum of $\gamma(\mathcal{B})$ over all possible families $\mathcal{B}$ of non-trivial bicolorings of $[n]$
	as $\gamma(n)$.  Note that no singleton set of $[n]$ is a member of any optimal system of unbiased representatives.
	
	Unbiased representatives are useful in testing products such as drugs over a large population
	where the effectiveness (or side-effect) of a new drug is studied in correlation with a large set 
	of patient attributes such as body weight, height, age, etc. Complementary extremes in the 
	attributes, such as being obese or underweight, tall or short, and young or old, are 
	relevant is such correlation studies.
	Such studies require patients with complementary ranges of values of a certain attribute to be present in
	equal (or roughly equal) numbers in the representative group for that attribute -- such a group may 
	be deemed to be an unbiased representative for the attribute.
	%
	%
	%
	However, selecting a separate sample of individuals for each attribute having equal representation of the complementary traits is practically impossible. 
	So, one needs to select a family $\mathcal{A}$ of samples of individuals such that 
	for any attribute $B$,	there exists a sample $A\in \mathcal{A}$ which has an equal representation of individuals from 
	the complementary traits of $B$.
	It is in the best interest to choose a family $\mathcal{A}$ of such groups of representatives of the smallest possible cardinality.
	It is not hard to see the direct mapping of this problem to the problem addressed in this paper.
	In a generic setting, SURs are useful in various applications 	where a collection of items (like individual patients) have many attributes (like weight, height and age), where 
	the objective is to form a small collection of subsets of items with almost equal representation of 
	opposite or complementary traits for each attribute.

	\subsection{Definitions and notations}
	
	We use `SUR' to denote the phrase `system of unbiased representatives'.
	For integers $n$ and $p$, let $[n]$ denote the set $\{1,\ldots,n\}$, and
	$[n\pm p]$ denote the set $\{n-p,n-p+1, \ldots,n+p\}$.
	A bicoloring $B$ of $[n]$ is called a $k$-\emph{bicoloring} if the number of +1's in $B$ is exactly $k$.
	For a bicoloring $B:[n] \rightarrow \{-1,1\}$, we use $B(+1)$ (respectively, $B(-1)$) to denote the set of points receiving color
	+1 (respectively, -1) under $B$.
	We use $Y_B$ ($X_A$) to denote the $n$-dimensional $\pm 1$ vector (respectively, 0-1 vector) representing the bicoloring $B$
	(respectively, $A \subseteq [n]$), i.e. $Y_B=(B(1),\ldots,B(n))$.
	Note that $\langle Y_B,X_A \rangle=0$ for some $A \in \binom{[n]}{r}$ implies that that $r$ is even.
	Throughout the rest of the paper, we consider only the non-trivial bicolorings and assume that every set in a SUR is of even cardinality.
	Let $\gamma(\mathcal{B},k,r)$ (respectively, $\gamma(\mathcal{B},[k_1,k_2],[r_1,r_2])$) be the minimum cardinality of a SUR  $\mathcal{A}$ for $\mathcal{B}$, where
	(i) each $B \in \mathcal{B}$ is a bicoloring of $[n]$ consisting of exactly $k$ +1's
	(respectively, at least $k_1$ and at most $k_2$ +1's), and,
	(ii) each $A \in \mathcal{A}$ is an $r$-sized 
	(respectively, at least $r_1$-sized and at most $r_2$-sized) subset of $[n]$.
	We define $\gamma(n,k,r)$ ($\gamma(n,[k_1,k_2],[r_1,r_2])$) as follows.
	\[  \gamma(n,k,r) = \max_{\mathcal{B}} \gamma(\mathcal{B},k,r).\]
	\[  \gamma(n,[k_1,k_2],[r_1,r_2]) = \max_{\mathcal{B}} \gamma(\mathcal{B},[k_1,k_2],[r_1,r_2]).\]
	Since no singleton set of $[n]$ can be a member of any optimal system of unbiased representative and
	the monochromatic bicolorings, consisting of exactly zero (or $n$) +1's, are trivial,
	$\gamma(n,[1,n-1],[2,n])$ is the same as $\gamma(n)$.

	\subsection{Relation to existing works}
	
	Given a family $\mathcal{F}$ of subsets of $[n]$, finding another family $\mathcal{F}'$ with certain properties 
	in relation with $\mathcal{F}$ has been well investigated. 
	One of the most studied problem in this direction is the computation of \emph{separating families}(see \cite{kat1966}).
	Let $\mathcal{F}$ consist of pairs $\{i,j\}$, $i,j \in [n]$, $i \neq j$ and $\mathcal{S}$
	be another family of subsets on $[n]$.
	A subset $S$ separates a pair $\{i,j\}$ if $i \in S$ and $j \not\in S$ or vice versa.
	The family $\mathcal{S}$ is a separating family for $\mathcal{F}$ if every pair $\{i,j\} \in \mathcal{F}$ is separated by some  $S \in \mathcal{S}$ (see \cite{renyi1961,kat1966,weg1979,dick1969,spencer1970} for detailed results and related problems on separating families). Separating families have many applications like `Wasserman-type' blood tests of large populations,
	diagnosis and chemical analysis, locating defective items, etc (see \cite{kat1973}).
	An extension of the separating family problem is the `test cover' problem: ``Given a family $\mathcal{F}$ of subsets of $[n]$,
	finding a sub-collection   $\mathcal{T} \subseteq \mathcal{F}$ of minimum cardinality such that
	every pair of $[n]$ is separated by some $S \in \mathcal{T}$''.
	The test cover problem is studied in the context of drug testing, biology \cite{payne1980identification,willcox1972automatic,lapage1973identification} and pattern recognition \cite{devijver1982pattern}.
	For results and related notions, see 
	\cite{moret1985minimizing,halldorsson2001approximability,de2002branch,crowston2012parameterized,basavaraju2016partially}.
	In the above problems, any two sized set $F=\{i,j\}$ can be viewed as a partial bicoloring $\chi:[n] \rightarrow \{-1,0,1\}$ where 
	$\chi(i)=-1$, $\chi(j)=+1$, and $\chi(p)=0$ for any $p \in [n]\setminus \{i,j\}$ and a set $S$ covers $F$ if and only if
	$\left\langle X_S,Y_{\chi}\right \rangle \in \{-1,+1\}$.
	
	An affine hyperplane is a set of vectors $H(a,b)=\{x \in \mathbb{R}^n: \left< a,x\right>=b\}$,
	where $a \in \mathbb{R}^n$ is a nonzero vector, $b \in \mathbb{R}$. Covering the $\{0,1\}^n$ Hamming cube
	with the minimum number of affine hyperplanes has been well studied - a point $x \in \{0,1\}^n$ is said to be \emph{covered} by a hyperplane 	$H(a,b)$ if $\left< a,x\right>=b$ (see \cite{Alon1993,Linial2005,sax2013}).
	It is not hard to see that 
	any SUR for the $2^{n}-2$ non-trivial bicolorings is a covering 
	for all the points of the $\{-1,1\}^n$ Hamming cube, except $\{(-1,\ldots,-1),(1,\ldots,1)\}$, 
	by hyperplanes $H(a,b)$ satisfying (i) $a \in \{0,1\}^n$ and (ii) $b=0$.
	
	\subsection{Summary of results}
	
	The paper is divided into three logical sections.
	The first section (Section \ref{sec:1chap3}) focuses on obtaining $O(\log |\mathcal{B}|)$ upper bounds for SURs when 	
	(i) the collection $\mathcal{B}$ of bicolorings is unrestricted or has minor restrictions, and
	(ii) the sets in the SURs are  unrestricted or have minor restrictions.
	When $\mathcal{B}$ consists of all the $2^n-2$ non-monochromatic bicolorings, 
	it is not difficult to show that $\frac{n}{2} \leq \gamma(\mathcal{B},[1,n-1],[2,n]) \leq n-1$.
	Using a nice application of Combinatorial Nullstellensatz \cite{AlonNull1999}, we improve the above lower bound to
	$n-1$.
	
	\begin{theorem}
		Let $n$ be a positive integer and $k \in [n]$. Then,
		$\gamma(n,[1,n-k],[2,n]) = n-1$, where $1 \leq k \leq \lceil\frac{n}{2}\rceil$.
		\label{thm:unrestrictchap3}
	\end{theorem}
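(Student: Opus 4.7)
The plan is to reduce the lower bound for every admissible $k$ to the extremal case $k=1$ via a negation symmetry, and then settle $k=1$ by an Alon--F\"uredi style Combinatorial Nullstellensatz argument. The upper bound is immediate: the $n-1$ pairs $\{\{i,n\}: 1 \leq i \leq n-1\}$ form an SUR for every non-trivial bicoloring, since any such $B$ has some $i<n$ with $B(i) \neq B(n)$, making $\{i,n\}$ unbiased for $B$.

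For the lower bound, the key observation is that $\langle X_A, Y_{-B}\rangle = -\langle X_A, Y_B\rangle$, so $A$ is unbiased for $B$ iff it is unbiased for $-B$. Hence any SUR for bicolorings of weight in $[1,n-k]$ is automatically an SUR for the negated range $[k,n-1]$; when $k \leq \lceil n/2 \rceil$ these two ranges together equal the full range $[1,n-1]$ of non-trivial weights. This reduces the lower bound to showing $\gamma(n,[1,n-1],[2,n]) \geq n-1$, i.e.\ the case where $\mathcal{B}$ consists of all $2^n - 2$ non-trivial bicolorings.

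For this reduced case, I would assume for contradiction an SUR $\mathcal{A} = \{A_1,\ldots,A_m\}$ with $r_i := |A_i| \geq 2$ and $m \leq n-2$, and consider $F(x) = \prod_{i=1}^{m} \bigl(2\sum_{j \in A_i}x_j - r_i\bigr)$ on $x \in \{0,1\}^n$ (obtained from the substitution $y_j = 2x_j - 1$), a polynomial of degree $m$ that vanishes on $\{0,1\}^n \setminus \{\mathbf{0},\mathbf{1}\}$. The unique multilinear polynomial agreeing with $F$ on the cube is $\tilde F = F(\mathbf{0})\,\delta_{\mathbf{0}} + F(\mathbf{1})\,\delta_{\mathbf{1}}$, where $\delta_e(x) = \prod_{j\in e}x_j\prod_{j\notin e}(1-x_j)$. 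Extracting the coefficient of $\chi_{[n]\setminus\{j_0\}} := \prod_{j \neq j_0} x_j$ for any fixed $j_0 \in [n]$: only the $\delta_{\mathbf{0}}$ term contributes (since $\mathbf{1}\not\subseteq [n]\setminus\{j_0\}$), yielding $(-1)^{n-1}F(\mathbf{0})$. Since this monomial has degree $n-1 > m \geq \deg \tilde F$, the coefficient must vanish, forcing $F(\mathbf{0}) = 0$; but $F(\mathbf{0}) = \prod_i(-r_i) \neq 0$, a contradiction. The main subtlety will be picking the correct monomial: $\chi_{[n]\setminus\{j_0\}}$ is precisely what kills the $\mathbf{1}$-contribution and isolates a single non-zero term, which is the lever the CN-style coefficient-extraction needs to force the $n-1$ lower bound.
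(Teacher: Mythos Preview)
Your proof is correct and follows essentially the same route as the paper. The reduction to the full range $[1,n-1]$ via the negation $B\mapsto -B$ is exactly the content of the paper's Proposition~1(i)--(ii), the upper bound is the same family of $n-1$ pairs, and the lower bound is the same Combinatorial Nullstellensatz argument applied to the product polynomial $\prod_{A\in\mathcal A}\langle X_A,Y_B\rangle$ after the affine substitution $y_j=2x_j-1$. The only difference is in how the Nullstellensatz step is executed: the paper multiplies by the extra linear factor $\sum_j x_j-n$ so that the resulting polynomial vanishes on all of $\{0,1\}^n$ except $\mathbf{0}$, and then invokes its Lemma~1 (an Alon--F\"uredi style bound) as a black box; you skip that multiplication by writing the multilinear interpolant as $F(\mathbf{0})\delta_{\mathbf{0}}+F(\mathbf{1})\delta_{\mathbf{1}}$ and extracting the coefficient of the degree-$(n-1)$ monomial $\prod_{j\ne j_0}x_j$, to which $\delta_{\mathbf{1}}$ cannot contribute. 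Both end at the same contradiction $F(\mathbf{0})=\prod_i(-r_i)=0$.
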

	
	We relate the problem of SUR to the hitting set problem, which in turn implies relations with `VC-dimension'
	provided $\epsilon n \leq |B(+1)| \leq (1-\epsilon)n$ for each	$B \in \mathcal{B}$.
	For such families $\mathcal{B}$, this relationship assists in establishing an $O(\log |\mathcal{B}|)$ upper bound
	for cardinalities of any optimal SUR.
	Under a similar restriction for each $B \in \mathcal{B}$, if it is mandatory that each set in the SUR is of cardinality exactly $r$, 
	the best upper bound obtained is large ($\Omega(\sqrt{r}\log |\mathcal{B}|)$).
	In order to establish an $O(\log |\mathcal{B}|)$ upper bound for size of an optimal SUR
	under this restriction, we introduce some error in the representations
	and we have the following theorem.
	
	\begin{theorem}
		Let $r' \in [r \pm \lceil\frac{r}{2}\rceil]$, where $r \geq 8$ is an integer.		
		Let $\mathcal{B}$ denote the set of all bicolorings $B \in \{-1,+1\}^n$,
		where $|B(+1)-B(-1)| \leq d$, for some $d \in \mathbb{N}$.
		Then, with high probability, one can construct a family $\mathcal{A}$ of cardinality at most $\ln |\mathcal{B}|$
		in $O(n|\mathcal{B}|\ln |\mathcal{B}|)$ time 
		consisting of $r'$-sized subsets such that for every $B \in \mathcal{B}$, there exists a set $A \in \mathcal{A}$ with
		$|\left\langle Y_B,X_A\right\rangle| \leq e\sqrt{r}+\frac{dr}{n}$.
		\label{thm:biasedchap3}
	\end{theorem}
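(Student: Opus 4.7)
The plan is to use the probabilistic method: sample $\ell := \lceil \ln|\mathcal{B}|\rceil$ independent uniform $r'$-subsets $A_1,\ldots,A_\ell$ of $[n]$ and argue that, with high probability, the resulting family $\mathcal{A}$ has the desired covering property. For each fixed $B \in \mathcal{B}$ one splits
\[
|\langle Y_B,X_A\rangle| \;\le\; |\langle Y_B,X_A\rangle - \mathbb{E}[\langle Y_B,X_A\rangle]| \;+\; |\mathbb{E}[\langle Y_B,X_A\rangle]|,
\]
and controls the two terms separately.

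For the mean, set $s_B := \sum_{j=1}^n B(j)$, so $|s_B|\le d$ by hypothesis. For a uniformly random $r'$-subset $A$ of $[n]$, linearity of expectation gives $\mathbb{E}[\langle Y_B, X_A\rangle] = r' s_B/n$, which is bounded in absolute value by $r'd/n$, matching the $dr/n$ term of the claim (up to a constant absorbed by $r' \le 3r/2$). For the deviation, observe that if $n_+$ counts the $+1$'s of $B$, then $\langle Y_B, X_A\rangle = 2X - r'$ where $X$ is hypergeometric with parameters $(n,n_+,r')$; Hoeffding's inequality for sampling without replacement (equivalently, the Chernoff bound for the hypergeometric distribution) then yields
\[
P\bigl[\,|\langle Y_B,X_A\rangle - \mathbb{E}[\langle Y_B,X_A\rangle]|\;\ge\;e\sqrt{r}\,\bigr] \;\le\; 2\exp\!\left(-\frac{e^2 r}{2r'}\right) \;\le\; 2\exp(-e^2/3) \;<\; 1/e,
\]
using $r' \le \lceil 3r/2\rceil$, which is valid because $r\ge 8$.

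Combining the two estimates, each random $A$ satisfies the conclusion of the theorem for a fixed $B$ with probability at least $1-q$ for some $q<1/e$; by independence, the probability that none of $A_1,\ldots,A_\ell$ works for $B$ is at most $q^\ell$. A union bound over $B\in\mathcal{B}$ gives
\[
P[\,\exists B\in\mathcal{B}\text{ uncovered}\,] \;\le\; |\mathcal{B}|\,q^{\ln|\mathcal{B}|} \;=\; |\mathcal{B}|^{1+\ln q} \;\to\; 0,
\]
since $\ln q < -1$. The running time claim is then immediate: one draws $\ell = O(\ln|\mathcal{B}|)$ subsets and, for each pair $(A_i,B)$, computes an inner product in $O(n)$ time, for a total of $O(n|\mathcal{B}|\ln|\mathcal{B}|)$ operations.

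The principal obstacle is the constant accounting in the concentration step: one needs a tail bound sharp enough that, even with $r'$ allowed to be as large as $\lceil 3r/2\rceil$, the deviation probability at the threshold $e\sqrt{r}$ drops strictly below $1/e$, so that $\ell=\ln|\mathcal{B}|$ samples genuinely suffice for the union bound. The hypergeometric Chernoff bound supplies exactly this margin, with the hypothesis $r\ge 8$ absorbing boundary effects from the discreteness of $\lceil r/2\rceil$ and the width of the range $[r\pm\lceil r/2\rceil]$.
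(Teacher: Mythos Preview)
Your argument is correct and in fact cleaner than the paper's own proof, though it follows a genuinely different route. The paper samples each element of $[n]$ into $A$ independently with probability $r/n$, so $|A|$ is itself random; it then uses a Chernoff bound to control $|A|$ and Chebyshev's inequality (with $\mathrm{Var}[\langle Y_B,X_A\rangle]=r(1-r/n)$) to obtain $P\bigl(|\langle Y_B,X_A\rangle - d_1 r/n|\ge e\sqrt{r}\bigr)<1/e^2$. Because the size constraint and the covering property are established for the Bernoulli model separately, the paper must oversample (drawing $100t$ sets), filter to those with $|A|\in[r\pm\lceil r/2\rceil]$, partition the survivors into $t$-sized blocks, and argue that some block works. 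You bypass this two-stage construction entirely by sampling uniform $r'$-subsets directly and invoking Hoeffding's inequality for the hypergeometric distribution, which yields an exponential rather than quadratic tail and makes the union bound over $\mathcal{B}$ immediate with $\ell=\ln|\mathcal{B}|$ samples. One small caveat: your mean term is $r'd/n$, not $rd/n$, so for $r'$ near $\lceil 3r/2\rceil$ your final bound reads $e\sqrt{r}+\tfrac{3}{2}\cdot\frac{dr}{n}$ rather than the stated $e\sqrt{r}+\frac{dr}{n}$; this is repaired simply by taking $r'=r$, which lies in the allowed range and, since then $e^2 r/(2r')=e^2/2>e^2/3$, only sharpens your tail estimate.
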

	
	In the second part of the paper (Section \ref{sec:2chap3}), we study the SUR problem where each $B \in \mathcal{B}$ is restricted to have exactly $k$ +1's
	and each set in the SUR is required to be of cardinality exactly $r$, for some $r,k \in [n]$, $2 \leq r \leq 2k$.
	We relate the SUR problem under such restrictions to `covering' problems, that enables us to use a deterministic algorithm of
	Lov\'{a}sz \cite{LOVASZ1975383} and Stein \cite{STEIN1974391} to compute such a SUR in polynomial time. 
	In particular, for sufficiently large values of $n$, and $k \leq \log_4 \log_4 (n^{0.5-\epsilon})$, we use a  
	result of Alon et al. \cite[Corollary 1.3]{Alon2003}
	to establish the following asymptotically tight bound on $\gamma(n,k,2k)$.
	
	\begin{theorem}
		For sufficiently large values of $n$,
		\[\frac{\binom{n}{k}} {\binom{2k}{k}} \leq \gamma(n,k,2k) \leq \frac{\binom{n}{k}} {\binom{2k}{k}}(1+o(1)), \]
		provided $k \leq \log_4 \log_4 (n^{0.5-\epsilon})$, for any $0 < \epsilon < 0.5$.
		\label{thm:nkrtightchap3} 	
	\end{theorem}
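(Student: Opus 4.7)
The plan is to recognize that, in this restricted regime, a $2k$-subset $A$ of $[n]$ is an unbiased representative for a $k$-bicoloring $B$ if and only if the positive class $B(+1)$ is contained in $A$, thereby turning the SUR question into a classical covering-design problem, for which Corollary~1.3 of Alon et al.\ gives the matching asymptotic bound.

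First I would record the elementary identity $\langle X_A,Y_B\rangle = |A\cap B(+1)| - |A\cap B(-1)| = 2|A\cap B(+1)| - 2k$, so $\langle X_A,Y_B\rangle = 0$ iff $|A\cap B(+1)| = k$. Since $|B(+1)| = k$ and $|A| = 2k$, this is in turn equivalent to the containment $B(+1)\subseteq A$. Hence any SUR of $2k$-subsets for a family $\mathcal{B}$ of $k$-bicolorings is precisely a family $\mathcal{A}$ of $2k$-subsets of $[n]$ such that every $k$-subset $B(+1)$ with $B\in\mathcal{B}$ lies inside some member of $\mathcal{A}$.

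For the lower bound I would take $\mathcal{B}$ to be the extremal family consisting of all $\binom{n}{k}$ distinct $k$-bicolorings of $[n]$; by the observation above, to prove $\gamma(\mathcal{B},k,2k) \ge \binom{n}{k}/\binom{2k}{k}$ it suffices to note that a single $2k$-subset contains exactly $\binom{2k}{k}$ distinct $k$-subsets, so a straightforward double-counting argument forces the stated bound on $\gamma(n,k,2k)$. For the upper bound, again working with the worst-case $\mathcal{B}$, the task reduces to constructing an $(n,2k,k)$-covering design --- a family of $2k$-subsets of $[n]$ that covers every $k$-subset --- of size at most $(1+o(1))\binom{n}{k}/\binom{2k}{k}$. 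This is exactly what Corollary~1.3 of Alon et al.\ \cite{Alon2003} provides, and the hypothesis $k \leq \log_4\log_4(n^{0.5-\epsilon})$ is precisely the regime where their near-optimality statement for covering designs applies.

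The main obstacle is essentially bookkeeping: one needs to verify that the parameter ranges of the Alon et al.\ result align with ours (in particular, that the growth restriction on $k$ in terms of $n$ falls inside their admissible window), after which the theorem follows as a black-box consequence of the covering-design reduction. No further combinatorial content beyond the containment observation and the standard counting bound is required; the substantive work is entirely outsourced to the cited covering-number asymptotics.
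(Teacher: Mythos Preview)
Your approach is correct and essentially matches the paper's: both reduce the $r=2k$ case to a covering problem and invoke Corollary~1.3 of Alon et al.\ \cite{Alon2003}; your containment observation $B(+1)\subseteq A$ is exactly what the paper uses (implicitly in its general Construction~\ref{const:1chap3}, and explicitly in the proof of Lemma~\ref{lemma:nkrtightchap3}) to identify the structure. The ``bookkeeping'' you defer---checking that the hypotheses $a=o(\log v)$ and $v_{pair}\in o(v/(e^{2a}\log v))$ of \cite{Alon2003} hold in this regime---is precisely the content of the paper's Lemma~\ref{lemma:nkrtightchap3}, where the containment observation yields $v_{pair}=\binom{n-k-1}{k-1}$ and the condition on $k$ is used to bound $e^{2a}$.
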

	
	The problem of estimation of $\gamma(n,k,r)$ becomes interesting when $k=\frac{n}{2}$ - the reduction to coverings
	gives a lower and upper bound of $\max\left(\left\lceil\frac{n}{2r}\right\rceil, c_1\sqrt{\frac{r(n-r)}{n}} \right)$ and $O(n\sqrt{\frac{r(n-r)}{n}})$, respectively. For $r=f(n)$, where $f(n)$ is an increasing function in $n$, this establishes only sub-linear lower bounds for $\gamma(n,\frac{n}{2},r)$. 
	We use a vector space orthogonality argument combined with a theorem of Keevash and Long \cite{keevash2017frankl}
	to obtain a linear lower bound on $\gamma(n,k,r)$ under certain restrictions on $n$, $k$ and $r$.
	
	\begin{theorem}
		Let $r=2c$
		for any odd integer $c \in \{1,\ldots,\frac{n}{2}\}$. 
		Let $k$ be an even integer, where $\epsilon n < k < (1-\epsilon)n$ for some $0 < \epsilon < 0.5$.
		Then, $\gamma(n,k,r) \geq \delta n$, 	
		where $\delta=\delta(\epsilon)$ is some real positive constant.
		\label{thm:keevashchap3}
	\end{theorem}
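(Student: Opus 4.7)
The plan is to work over $\mathbb{F}_2$ and leverage the assumption that $c$ is odd. For $A \in \binom{[n]}{r}$ with $r = 2c$ and a $k$-bicoloring $B$ with $K := B(+1)$, we have $\langle X_A, Y_B\rangle = 2|A\cap K| - r$, so $A$ is an unbiased representative for $B$ iff $|A\cap K| = c$. Because $c$ is odd, this equality forces $|A\cap K|$ to be odd, i.e.\ $\langle v_A, v_K\rangle = 1$ in $\mathbb{F}_2$, where $v_A, v_K \in \mathbb{F}_2^n$ denote the $\{0,1\}$-incidence vectors of $A$ and $K$. Thus, \emph{any} $k$-set $K$ whose incidence vector is $\mathbb{F}_2$-orthogonal to every $v_A$ with $A\in\mathcal{A}$ automatically kills the SUR property, since then $|A\cap K|$ is even for every $A$ and in particular is never equal to the odd value $c$.

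Supposing for contradiction that $|\mathcal{A}| < \delta n$ for a $\delta = \delta(\epsilon) > 0$ to be fixed, set
\[
V \;:=\; \bigl\{\, v \in \mathbb{F}_2^n \;:\; \langle v_A, v\rangle = 0 \text{ for every } A \in \mathcal{A}\,\bigr\}.
\]
Then $\dim V \geq n - |\mathcal{A}| > (1-\delta)n$. The aim is now to produce a vector of Hamming weight exactly $k$ inside $V$; its support yields the lethal $k$-subset $K$ and contradicts the SUR assumption. This existence statement is precisely what the Keevash--Long theorem from \cite{keevash2017frankl} provides: for every $\epsilon > 0$ there is $\delta = \delta(\epsilon) > 0$ such that any subspace of $\mathbb{F}_2^n$ of codimension at most $\delta n$ contains a vector of every admissible weight in the window $[\epsilon n, (1-\epsilon)n]$. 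Since the hypothesis gives $\epsilon n < k < (1-\epsilon)n$, choosing $\delta$ as in that theorem delivers the required $v_K \in V$ and closes the argument.

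The step I expect to be the main obstacle is the parity bookkeeping in applying Keevash--Long. Since every $A \in \mathcal{A}$ has the even size $r = 2c$, the all-ones vector $\mathbf{1}_n$ may fail to lie in $V^\perp$, yet $V$ can still be confined to the even-weight code depending on $\mathrm{span}\{v_A\}$; this is exactly why the hypothesis that $k$ is \emph{even} is crucial, so that the admissibility (parity) condition in Keevash--Long is met. One must also verify that their quantitative bound yields $\delta$ depending only on $\epsilon$ (uniformly over $k$ in the window) and does not degrade when the codimension of $V$ comes from incidence vectors of fixed-size sets. Once these parity and quantitative details are checked, the contrapositive yields $\gamma(n,k,r) \geq \delta(\epsilon)\, n$, as claimed.
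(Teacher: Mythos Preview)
Your proposal is correct and follows essentially the same approach as the paper. The paper lets $V=\mathrm{span}_{\mathbb{F}_2}\{X_A:A\in\mathcal{A}\}$ and shows $V^{\perp}$ (your $V$) contains no weight-$k$ vector, hence is a $k$-avoiding code, so $|V^{\perp}|\le 2^{(1-\delta)n}$ by Keevash--Long and thus $|\mathcal{A}|\ge\dim V\ge\delta n$; you run the same argument in contrapositive form, and your parity remark about $k$ being even is exactly the $q=2$ admissibility condition the paper invokes.
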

	
	Combined with an upper bound construction given in Lemma \ref{lemma:nn2upchap3},
	this establishes an asymptotically tight bound for $\gamma(n,\frac{n}{2},\frac{n}{2})$, when $\frac{n}{2} \equiv 2 \Mod{4}$.
	
	In the third part of the paper (Section \ref{sec:3chap3}), 
	we obtain the following inapproximability result for computing optimal SURs by using a result  of Dinur and Steurer \cite{dinur2014}
	on the inapproximability of the hitting set problem.

	\begin{theorem}
		Let $r \leq  (1-\Omega(1))\frac{\ln n}{2.34}$, where $n \geq 4$ is an integer.
		Then, no deterministic polynomial time algorithm can 
		approximate the system of unbiased representative problem for a family of bicolorings on $[n]$
		to within a factor $(1-\Omega(1))\frac{\ln n}{2.34r}$ of the optimal 
		when each set chosen in the representative family is required to have its cardinality at most $r$, unless P=NP.
		\label{thm:inappchap3}
	\end{theorem}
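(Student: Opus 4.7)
The plan is to establish this inapproximability result via a polynomial-time gap-preserving reduction from the Hitting Set problem, invoking the $(1-o(1))\ln N$ inapproximability factor of Dinur and Steurer \cite{dinur2014}, where $N$ denotes the hitting-set universe size.

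First, I would observe that the cardinality-restricted SUR problem itself has a natural Hitting Set formulation: take the universe to be $\mathcal{U}=\{A\subseteq [n] : 2\leq |A|\leq r,~|A|~\text{even}\}$, and for each $B\in\mathcal{B}$ define the set to be hit as $S_B=\{A\in\mathcal{U} : \langle X_A,Y_B\rangle=0\}$. A minimum-cardinality SUR for $\mathcal{B}$ with set sizes at most $r$ is precisely a minimum hitting set for $\{S_B : B\in\mathcal{B}\}$.

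Next, I would reduce an arbitrary hitting set instance $(U,\mathcal{F})$ to a SUR instance of this form: for each $u\in U$ associate a distinct subset $A_u\subseteq[n]$ of even cardinality at most $r$, and for each $F\in\mathcal{F}$ construct a bicoloring $B_F$ on $[n]$ whose unbiased representatives among sets of size at most $r$ are exactly $\{A_u : u\in F\}$. A size-$k$ hitting set for $(U,\mathcal{F})$ then corresponds, via $u\leftrightarrow A_u$, bijectively to a size-$k$ SUR for $\mathcal{B}=\{B_F : F\in\mathcal{F}\}$. Tracking how $n$ relates to $|U|$ under the chosen encoding (and the blowup from the gadget construction used to produce the $B_F$'s), the $(1-o(1))\ln|U|$ Dinur--Steurer factor for Hitting Set translates into the stated $(1-\Omega(1))\frac{\ln n}{2.34r}$ factor for SUR, with the constant $2.34$ absorbing the explicit parameters.

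The main obstacle will be constructing the bicolorings $B_F$ so that no unintended small subset of $[n]$ is an unbiased representative of $B_F$. A spurious unbiased representative $A$ not of the form $A_u$ would let a SUR be much smaller than the corresponding hitting set, destroying the approximation gap. Circumventing this will likely require a gadget construction where each $F\in\mathcal{F}$ acts on its own dedicated block of coordinates in $[n]$, combined with a parity or balance argument forcing every short subset with $\langle X_A,Y_{B_F}\rangle=0$ to respect that block structure and hence coincide with an intended $A_u$ with $u\in F$.
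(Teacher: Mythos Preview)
Your plan aims for a bijective, gap-preserving reduction in which the unbiased representatives of $B_F$ of size at most $r$ are \emph{exactly} $\{A_u : u\in F\}$. This is unattainable. For any non-trivial bicoloring $B$, every pair $\{i,j\}$ with $B(i)\neq B(j)$ is already an unbiased representative, and these pairs form the complete bipartite graph $K_{|B(+1)|,|B(-1)|}$ on $[n]$. Since sets of size $2\le r$ are always permitted, the collection of size-$\le r$ unbiased representatives of any single bicoloring is forced to contain this complete bipartite family; it can never be an arbitrary prescribed list $\{A_u : u\in F\}$. No block or parity gadget changes this, because the obstruction already lives at cardinality $2$. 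Consequently the tight size-$k\leftrightarrow$ size-$k$ correspondence you describe cannot hold, and the argument as written does not go through. (Incidentally, were such a bijection available it would yield an $\Omega(\ln n)$ lower bound with no $r$ in the denominator, stronger than what the theorem claims.)

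The paper avoids this entirely by \emph{not} trying to control which small sets are unbiased representatives. It uses only the trivial fact that any unbiased representative $A$ of $B$ satisfies $A\cap B(+1)\neq\emptyset$ and $A\cap B(-1)\neq\emptyset$. Hence for any SUR $\mathcal{A}$ with sets of size at most $r$, the union $V=\bigcup_{A\in\mathcal{A}}A$ is a hitting set for the family $\{B(+1),B(-1):B\in\mathcal{B}\}$, and $|V|\le r\,|\mathcal{A}|$; this single inequality is precisely where the factor $r$ in the denominator originates. For the reverse comparison one invokes Lemma~\ref{claim:1chap3}, which turns a hitting set of size $h$ into a SUR of size $h-1$ made of $2$-element sets. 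Because the family $\{B(+1),B(-1)\}$ is automatically complement closed, the paper first shows (Proposition~\ref{prop:specialchap3}) that hitting set restricted to complement-closed families is still $(1-\Omega(1))\frac{\ln n}{2.34}$-inapproximable --- adding complements at most doubles the optimum and adds one point to the universe, which is what produces the constant $2.34$. The SUR instance is then simply $\mathcal{B}=\{B : B(+1)=S,\ S\in\mathcal{S}\}$ on the \emph{same} ground set $[n]$: no encoding of universe elements as subsets, no gadget blocks, and no worry about spurious representatives.
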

	
	\section{When cardinalities of sets in the `SUR' are unrestricted or semi-restricted}
	\label{sec:1chap3}
	\subsection{Bounds on $\gamma(n,[k,n-1],[2,n])$}
	\label{subsec:1.1chap3}
	
	Recall that $\gamma(n)=\max_{\mathcal{B}} \gamma(\mathcal{B})$,
	where $\gamma(\mathcal{B})$ is the cardinality of an optimal
	system of unbiased representative for $\mathcal{B}$.
	Observe that
	$\gamma(\mathcal{B}_1) \leq \gamma(\mathcal{B}_2)$ when
	$\mathcal{B}_1 \subseteq \mathcal{B}_2$.
	So, to establish bounds on $\gamma(n)$, it suffices to consider the set of all the $2^n-2$ non-monochromatic bicolorings as $\mathcal{B}$ and establish bounds on $\gamma(\mathcal{B})$.
	We have the following proposition.
	
\begin{proposition}
	Let $n$ be an integer and $k \in [n]$.
	\begin{enumerate}[label=(\roman*)]
\item \label{p:eq1} $\gamma(n,[k,n-1],[2,n]) =\gamma(n,[1, n-k],[2,n])$.
\item \label{p:eq2} $\gamma(n,[1, n-k],[2,n]) = \gamma(n,[1, \lfloor\frac{n}{2}\rfloor],[2,n])$, for any $1 \leq k \leq \lceil\frac{n}{2}\rceil$.
\item \label{p:eq3} $\gamma(n,[1,n-k],[2,n]) \leq n-1$, for $1 \leq k \leq n$.
\item \label{p:eq4} $\frac{n}{2} \leq \gamma(n,1,[2,n]) \leq \gamma(n,[1,n-k],[2,n])$, for $1 \leq k \leq n-1$.
	\end{enumerate}
\label{prop:chap3easychap3}
\end{proposition}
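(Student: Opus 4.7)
My plan is to dispatch the four parts in order, leaning on the negation involution $B \mapsto -B$ together with one explicit family and one edge-cover lower bound.

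For part \ref{p:eq1}, I observe that $\langle X_A, Y_{-B}\rangle = -\langle X_A, Y_B\rangle$, so $\mathcal{A}$ is a SUR for $\mathcal{B}$ if and only if it is a SUR for $\{-B : B \in \mathcal{B}\}$. Since negation maps bicolorings with $j$ ones to those with $n-j$ ones, it bijects families with +1 counts in $[k, n-1]$ onto families with +1 counts in $[1, n-k]$ while preserving the minimum SUR size; maximizing over families gives \ref{p:eq1}. For part \ref{p:eq2}, the inequality $\gamma(n,[1,\lfloor n/2\rfloor],[2,n]) \leq \gamma(n,[1,n-k],[2,n])$ is immediate by monotonicity, since $n-k \geq \lfloor n/2\rfloor$ when $k \leq \lceil n/2\rceil$. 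For the reverse, given any family $\mathcal{B}$ with +1 counts in $[1, n-k]$, I will split $\mathcal{B}$ into a low part with counts in $[1, \lfloor n/2\rfloor]$ and a high part with counts in $[\lfloor n/2\rfloor+1, n-k]$, then replace each bicoloring in the high part by its negation, whose +1 count now lies in $[k, \lceil n/2\rceil - 1] \subseteq [1, \lfloor n/2\rfloor]$; by \ref{p:eq1}, any SUR for the merged family also serves as a SUR for $\mathcal{B}$.

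For the upper bound in \ref{p:eq3}, I exhibit the explicit family $\mathcal{A} = \{\{1, i\} : 2 \leq i \leq n\}$ of $n-1$ pairs. For any non-monochromatic $B$, some $i \neq 1$ satisfies $B(i) \neq B(1)$, so the pair $\{1, i\}$ is an unbiased representative for $B$; monotonicity extends this to all $k \in [1, n-1]$, and $k = n$ is vacuous. The upper inequality of \ref{p:eq4} is again pure monotonicity, since the single-plus bicolorings lie in $[1, n-k]$ whenever $k \leq n-1$.

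The only substantive work lies in the lower bound $\gamma(n, 1, [2, n]) \geq n/2$. Let $B_i$ denote the bicoloring with $B_i(i) = +1$ and $B_i(j) = -1$ elsewhere. A direct count gives $\langle X_A, Y_{B_i}\rangle = 2 - |A|$ if $i \in A$ and $-|A|$ otherwise, which vanishes exactly when $|A| = 2$ and $i \in A$. Hence the 2-element sets contained in any SUR for $\{B_1, \ldots, B_n\}$ form an edge cover of $[n]$, and every edge cover needs at least $\lceil n/2\rceil$ edges. The main obstacle is really just the floor/ceiling bookkeeping in part \ref{p:eq2}, where I need to check for both parities of $n$ that negating the high part lands its +1 counts inside $[1, \lfloor n/2\rfloor]$; once that is verified, the rest is either a straightforward construction or a monotonicity observation.
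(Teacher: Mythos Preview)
Your proof is correct and follows essentially the same route as the paper: the negation involution for \ref{p:eq1} and \ref{p:eq2}, the explicit star family $\{\{1,i\}:2\le i\le n\}$ for \ref{p:eq3}, and the observation that only pairs can represent a $1$-bicoloring (phrased by you as an edge-cover bound, by the paper as a double count) for \ref{p:eq4}. Your treatment of \ref{p:eq2} is in fact more explicit than the paper's, which simply says it follows from the proof of \ref{p:eq1}; the floor/ceiling check you flag is the only place any care is needed, and you handle it correctly.
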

	
\begin{proof}
	
\ref{p:eq1}  For any $k$-bicoloring $B$, any unbiased representative $A$ for $B$ is also an unbiased representative
for the bicoloring $B'$, where $B'(+1)=B(-1)$ and $B'(-1)=B(+1)$.\\
\ref{p:eq2} The proof follows from the proof of Statement \ref{p:eq1} in Proposition \ref{prop:chap3easychap3}.\\
\ref{p:eq3} Let $\mathcal{B}$ denote the set of all the $2^n-2$ non-monochromatic bicolorings.
It is not hard to see that $\mathcal{A}=\{\{1,2\},\{1,3\},\ldots,\{1,n\}\}$
is a SUR of cardinality $n-1$ for $\mathcal{B}$. \\
\ref{p:eq4} 	Let $\mathcal{B}=\{B| |B(+1)|=1 \}$. So, $|\mathcal{B}|=n$.
For any $B \in \mathcal{B}$, if for any $A \subseteq [n]$,
$\left\langle Y_B,X_A \right\rangle=0$, then $|A|=2$.
Moreover, for any $A \in \binom{[n]}{2}$, exactly two $B \in \mathcal{B}$ has $\left\langle Y_B,X_A \right\rangle=0$.
So, we need at least $\frac{n}{2}$ two sized sets to form a SUR for $\mathcal{B}$. The second inequality follows from the containment.
\qed
\end{proof}	

In the construction leading to the proof of Statement \ref{p:eq3} in Proposition \ref{prop:chap3easychap3},
only two-sized sets are used as unbiased representatives. We have the following slightly non-trivial construction assuming $n=2^p$, for some integer $p$, giving similar bounds. 
Let $\mathcal{A}_2=\{\{1,2\},\{3,4\},\ldots, \{n-1,n\}\}$ : a partition of $[n]$ into two-sized sets.
Let $\mathcal{A}_4=\{\{1,2,3,4\}, \{5,6,7,8\},\ldots, \{n-3,n-2,n-1,n\}\}$ : a partition of $[n]$ into four-sized sets taken in that order. Similarly, repeating the construction for $p-2$ more steps, we obtain a sequence of partitions of $[n]$, 
$\mathcal{A}_2,\mathcal{A}_4,\ldots, \mathcal{A}_n$, where $\mathcal{A}_i$ is a partition of $[n]$ into $i$-sized $\frac{n}{i}$ parts, 
i.e., $\mathcal{A}_{i}=\{\{1,\ldots,i\}, \{i+1,\ldots,2i\}, \ldots, \{n-i+1,\ldots,n\} \}$.
Let $\mathcal{A} = \mathcal{A}_2 \cup \mathcal{A}_4 \cup \cdots \cup \mathcal{A}_n$. It follows that $|\mathcal{A}|=2^{p-1}+2^{p-2}+\ldots+1=2^{p}-1=n-1$.
To see that this is indeed a SUR for the set of all the $2^n-2$ non-monochromatic bicolorings,
let $B \in \{-1,1\}^n$ denote any non-trivial bicoloring of $[n]$.
Without loss of generality, assume that $|B(+1)| \leq |B(-1)|$.
Let $i$ ($2 \leq i \leq n$)  be the minimum index such that there exists an  $A \in \mathcal{A}_{i}$ with $A \setminus B(+1) \neq \phi$
and $A \cap B(+1) \neq \phi$. 
From construction of $\mathcal{A}_{i}$ and assumption on $i$, it follows that
there exists consecutive parts $A_1,A_2 \in \mathcal{A}_{\frac{i}{2}}$ with $A_1 \subseteq B(+1)$, $A_2 \cap B(+1) = \phi$, and $A = A_1 \cup A_2$.
So, it follows that $A$ is an unbiased representative for $B$.

To establish a tight lower bound on $\gamma(n,[1,\lceil\frac{n}{2}\rceil],[2,n])$ ($\gamma(n,[1,n-1],[2,n])$), we 
need the following lemma.

\begin{lemma}
	Let $F \in \mathbb{F}(x_1,\ldots,x_n)$ be a polynomial and
	$S_1,\ldots,S_n$ be non-empty subsets of $\mathbb{F}$,
	for some field $\mathbb{F}$. If $F$ vanishes on all but one point
	$(s_1,\ldots,s_n) \in S_1 \times \cdots \times S_n \subseteq \mathbb{F}^n$,
	then deg($F$) $\geq \sum_{i=1}^{n}(|S_i|-1)$.
	\label{lemma:nullchap3}
\end{lemma}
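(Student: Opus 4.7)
The plan is to reduce $F$ to a canonical representative modulo the vanishing ideal of the grid $S_1 \times \cdots \times S_n$, and then to compare it with the Lagrange indicator polynomial at $(s_1,\ldots,s_n)$. Writing $g_i(x_i) := \prod_{t \in S_i}(x_i-t) = x_i^{|S_i|} - r_i(x_i)$ with $\deg r_i < |S_i|$, the identity $x_i^{|S_i|} \equiv r_i(x_i)$ on $S_i$ lets one repeatedly rewrite any monomial containing a factor $x_i^{|S_i|}$ using $r_i$. Each such substitution strictly lowers the total degree of the affected monomial (the factor $x_i^{a_i}$ with $a_i \ge |S_i|$ becomes a polynomial in $x_i$ of degree at most $a_i - 1$). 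The procedure therefore terminates with a polynomial $\tilde F$ that is \emph{reduced}, i.e.\ $\deg_{x_i}\tilde F < |S_i|$ for every $i$, agrees with $F$ on the grid, and satisfies $\deg(\tilde F) \le \deg(F)$.

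Next, I would write down the explicit Lagrange polynomial
\[
\delta(x_1,\ldots,x_n) \;:=\; \prod_{i=1}^{n}\prod_{t\in S_i\setminus\{s_i\}}\frac{x_i-t}{s_i-t},
\]
which is reduced, takes the value $1$ at $(s_1,\ldots,s_n)$, vanishes at every other grid point, and has total degree exactly $\sum_{i=1}^{n}(|S_i|-1)$. Setting $c := F(s_1,\ldots,s_n) \ne 0$, the polynomial $\tilde F - c\delta$ is reduced and vanishes identically on $S_1 \times \cdots \times S_n$; a uniqueness claim (a reduced polynomial vanishing on the entire grid must be zero) then forces $\tilde F = c\delta$, and hence $\deg(F) \ge \deg(\tilde F) = \sum_{i=1}^{n}(|S_i|-1)$.

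The only delicate step is the uniqueness claim, which I would prove by induction on $n$. The base case $n=1$ is the familiar fact that a univariate polynomial of degree less than $|S_1|$ with $|S_1|$ distinct roots must be identically zero. For the inductive step, expand the reduced polynomial along $x_n$ as $\sum_{j < |S_n|} P_j(x_1,\ldots,x_{n-1})\, x_n^j$; evaluated at any fixed $(a_1,\ldots,a_{n-1})$ in the smaller grid, it is a univariate polynomial in $x_n$ of degree $< |S_n|$ with $|S_n|$ roots, so each $P_j(a_1,\ldots,a_{n-1})$ vanishes, and the inductive hypothesis finishes the job. Aside from this short induction and the degree-preservation check in the reduction step, the argument is pure bookkeeping.
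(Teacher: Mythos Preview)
Your proof is correct, but it takes a different route from the paper's. The paper argues by contradiction: assuming $\deg F < \sum_{i=1}^{n}(|S_i|-1)$, it forms the combination $c_2 F - c_1 G$, where $G(x_1,\ldots,x_n)=\prod_i\prod_{s\in S_i\setminus\{s_i\}}(x_i-s)$ is (up to the normalising constant $c_2=G(s_1,\ldots,s_n)$) exactly your Lagrange indicator $\delta$. This combination vanishes on the whole grid, yet the monomial $x_1^{|S_1|-1}\cdots x_n^{|S_n|-1}$ survives with coefficient $-c_1\neq 0$ because $c_2F$ is too low in degree to interfere; Alon's Combinatorial Nullstellensatz then produces a non-vanishing grid point, giving the contradiction. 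You instead reduce $F$ to a canonical form modulo the grid ideal and invoke the uniqueness of reduced representatives, which you prove directly by induction on $n$. Your argument is self-contained and never cites the Nullstellensatz, whereas the paper's is shorter once that theorem is taken as a black box; in fact your uniqueness lemma is essentially one ingredient in the standard proof of the Nullstellensatz itself, so the two arguments are close relatives rather than unrelated.
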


\begin{proof}
	For the sake of contradiction, assume that
	deg($F$) $< \sum_{i=1}^{n}(|S_i|-1)$.
	Consider the polynomials.
	\begin{align*}
	H(x_i)= \prod_{s \in S_i \setminus\{s_i\}}(x_i-s). \\
	G(x_1,\ldots,x_n)= \prod_{i=1}^{n}H(x_i).
	\end{align*}
	Note that deg($G$) is $\sum_{i=1}^{n}(|S_i|-1)$.
	Let $F(s_1,\ldots,s_n)=c_1$ and $G(s_1,\ldots,s_n)=c_2$.
	Then, the polynomial $c_2F-c_1G$ vanishes on all points of $S_1 \times \cdots \times S_n$.
	However, $c_2F-c_1G$ has degree $\sum_{i=1}^{n}(|S_i|-1)$: the monomial $x_1^{|S_1|-1}\cdots x_n^{|S_n|-1}$ has $-c_1$ as its coefficient.
	Using Combinatorial Nullstellensatz \cite{alon1999}, 
	there exists at least one point in $S_1 \times \cdots \times S_n$
	where $c_2F-c_1G$ is non-zero which is a contradiction.
	\qed
\end{proof}

	
%

	\subsubsection*{Proof of Theorem \ref{thm:unrestrictchap3}}
	
	\begin{statement}[Statement of Theorem \ref{thm:unrestrictchap3}]
		Let $n$ be a positive integer and $k \in [n]$. Then,
		$\gamma(n,[1,n-k],[2,n]) = n-1$, where $1 \leq k \leq \lceil\frac{n}{2}\rceil$.
	\end{statement}

	\begin{proof}
		
		From Statements \ref{p:eq2} and \ref{p:eq3} of Proposition \ref{prop:chap3easychap3}, we know that
		in order to prove Theorem \ref{thm:unrestrictchap3}, 
		we only need to   
		establish a lower bound of $n-1$ for $\gamma(n,[1,n-1],[2,n])$.
		
		Let $\mathcal{B}$ denote the set of all the $2^{n}-2$ non-monochromatic bicolorings of $[n]$. 
		Let $\mathcal{A}$ be a SUR of minimum cardinality for $\mathcal{B}$.
		Let $Y_B$ ($X_A$)  denote the $n$-dimensional $\pm 1$ vector (respectively, 0-1 vector) representing the bicoloring $B$
		(respectively, $A \subseteq [n]$)
		Consider the polynomial $P(Y_B)$, $B \in \mathcal{B}$.
		
		\begin{align}
		\label{eq:pf1}
		P(Y_B)= \prod_{A \in \mathcal{A}} \langle X_A,Y_B \rangle.
		\end{align}
		
		From the definition of $\mathcal{A}$, $P(Y_B)$ vanishes on all non-trivial bicolorings of $[n]$.
		Now, consider the following polynomial $P'(X)$.
		
		\begin{align}
		P'(X=(x_1,\ldots,x_n))= P(Y_B=(1-2x_1,\ldots,1-2x_n)) (x_1+\ldots+x_n - n).
		\end{align}
		
		$P'(X)$ vanishes at every  $X\in \{0,1\}^n$ except at the point $(0,\ldots,0)$ : $P$ vanishes 
		at every $X\in \{0,1\}^n$ except the two points $(0,\ldots,0)$ and $(1,\ldots,1)$ and
		$(x_1+\ldots+x_n - n)$ vanishes at $(1,\ldots,1)$.
		$P'(X)$ has degree at most $deg(P)+1$ (note that one can repeatedly replace $x_i^2$ with $x_i$ since $x_i \in \{0,1\}$).
		Using Lemma \ref{lemma:nullchap3} with each $S_i=\{0,1\}$, $1 \leq i \leq n$,
		it follows that $deg(P)+1 \geq deg(P') \geq n$.
		So, $|\mathcal{A}|=deg(P) \geq n-1$.
		\qed
	\end{proof}
	
%
	
	\begin{remark}
Lemma \ref{lemma:nullchap3} can also be used to obtain an alternative proof of induction base case of the 
Cayley-Bacharach  theorem by Riehl and Graham \cite{riehl2003} (see Appendix \ref{app:2chap3}). 
An alternative proof of the above lower bound can also be obtained using  the Cayley-Bacharach  
theorem by Riehl and Graham \cite{riehl2003}.
	\end{remark}

	Note that in Section \ref{subsec:1.1chap3}, the underlying set $\mathcal{B}$ of all the non-trivial bicolorings of $[n]$, 
	has cardinality $|\mathcal{B}|= 2^n-2$. In this case, Theorem \ref{thm:unrestrictchap3} establishes that $\gamma(n,[1,n-1],[2,n])=
	n-1=\Theta(\log |\mathcal{B}|)$. In the following section, 
	we match the $O(\log |\mathcal{B}|)$ upper bound for slightly restricted sets $\mathcal{B}$ of bicolorings. 
	
	\subsection{Relation to hitting sets for arbitrary collection of bicolorings} 
	\label{subsec:1.2chap3}
	
	Let $\mathcal{S}$ denote a collection of subsets of $[n]$.
	A subset $V \subseteq [n]$ is a \emph{hitting set} for $\mathcal{S}$
	if for every $S \in \mathcal{S}$, $V \cap S$ is non-empty.
	Let $H(\mathcal{S})$ denote a minimum cardinality hitting set of $\mathcal{S}$.
	The decision version of the Hitting set problem is:
	``Given the pair $(\mathcal{S},[n])$ and an integer $k$ as input, decide whether there exists a hitting set of cardinality
	at most $k$ for  $\mathcal{S}$''.

	
	\begin{lemma}
		Let $\mathcal{B}=\{B_0,\ldots,B_{m-1}\} \subseteq \{-1,+1\}^n$ be  a family of bicolorings
		of $[n]$.
		Construct the family $\mathcal{C}=\{C_1,\ldots,C_{2m}\}$
		where $C_{2i+1}=B_i(+1)$ and $C_{2i+2}=B_i(-1)$, for $0 \leq i \leq m-1$.
		Let $H=\{h_1,h_2,h_3,\ldots\}$ denote a hitting set for $\mathcal{C}$.
		Define $\mathcal{A} = \{(h_1,h_q)|h_q \in H, q > 1\}$.
		Then, $\mathcal{A}$ is a SUR for $\mathcal{B}$ of cardinality $|H|-1$.
		\label{claim:1chap3}
	\end{lemma}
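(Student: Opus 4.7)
The plan is to interpret the inner product condition for two-element sets and then use the hitting set property on the two color classes of each bicoloring.

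First I would record the basic observation that drives everything: for any two-element set $A=\{h_1,h_q\}$, we have $\langle X_A,Y_B\rangle = B(h_1)+B(h_q)$, which equals $0$ precisely when $h_1$ and $h_q$ receive opposite colors under $B$. So the unbiased representative condition for a pair reduces to ``the pair is bichromatic under $B$''. Since every set in $\mathcal{A}$ has the form $\{h_1,h_q\}$ with $q>1$, the task becomes showing that for each $B\in\mathcal{B}$ there exists $q>1$ with $B(h_q)\neq B(h_1)$.

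Next I would exploit the hitting set hypothesis. By construction, for every $B\in\mathcal{B}$ both color classes $B(+1)$ and $B(-1)$ appear as members of $\mathcal{C}$, so $H$ meets each of them. Fix an arbitrary $B\in\mathcal{B}$ and split into cases on the color of the distinguished pivot $h_1$. If $B(h_1)=+1$, then since $H\cap B(-1)\neq\emptyset$ and $h_1\notin B(-1)$, there is some $h_q\in H\cap B(-1)$ with $q>1$; the pair $\{h_1,h_q\}\in\mathcal{A}$ is then bichromatic under $B$ and is an unbiased representative. The symmetric argument handles $B(h_1)=-1$ by picking $h_q\in H\cap B(+1)$ with $q>1$. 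This shows $\mathcal{A}$ is indeed a SUR for $\mathcal{B}$.

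Finally, the cardinality claim is immediate from the construction: $\mathcal{A}=\{\{h_1,h_q\}:q=2,\ldots,|H|\}$ has exactly $|H|-1$ elements (they are pairwise distinct since the $h_q$ are distinct). There is no real obstacle here; the only subtlety to double-check is that when $h_1$ lies in (say) $B(+1)$, the existence of $h_q\in H\cap B(-1)$ is guaranteed by $H$ hitting $B(-1)$ and then automatically $q\neq 1$, so the pair belongs to $\mathcal{A}$ as defined.
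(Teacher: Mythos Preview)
Your argument is correct and follows essentially the same route as the paper: both reduce to checking that for each $B\in\mathcal{B}$ the pivot $h_1$ and some other $h_q\in H$ land in opposite color classes, and both obtain such an $h_q$ from the fact that $H$ hits $B(+1)$ and $B(-1)$. The paper phrases this as a proof by contradiction, while you give the direct version and make the inner-product computation $\langle X_{\{h_1,h_q\}},Y_B\rangle=B(h_1)+B(h_q)$ explicit, but there is no substantive difference.
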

	\begin{proof}
		For the sake of contradiction, assume that $B_i \in \mathcal{B}$ has no unbiased representative in $\mathcal{A}$.
		Assume that $h_1 \in B_i(+1)$.
		Since $H$ is a hitting set for $\mathcal{C}$, there exists some $h_q \in H$ such that $h_q$ hits $C_{2i+2}$ (and, thereby $B_i(-1)$).
		Then, the pair $(h_1,h_q)$ is an unbiased representative for $B_i$, a contradiction to our assumption.
		So, $h_1 \not\in B_i(+1)$. But this implies that $h_1 \in B_i(-1)$.
		A similar contradiction can be obtained in this case.
		\qed
	\end{proof}
	
	Let  $\mathcal{B}$ be restricted to a special family of bicolorings:
	the number of +1's for each $B \in \mathcal{B}$ lies in the range $\epsilon n$ and $(1- \epsilon)n$, 
	i.e., $\epsilon n \leq |B(+1)| \leq (1-\epsilon) n$, for some fixed $0 < \epsilon < \frac{1}{2}$.
	Construct the family $\mathcal{C}$ as above and let $d$ be the VC-dimension of $\mathcal{C}$.
	Note that every $C \in \mathcal{C}$ has size at least $\epsilon n$,
	for some fixed $\epsilon < \frac{1}{2}$.
	Using a result of Haussler and Welzl \cite{haussler1987} which was improved by Komlos et al. \cite{Komlos1992},
	we can get an `epsilon net' $H$ (which is a hitting set for $\mathcal{C}$)  of cardinality at most $\frac{d}{\epsilon}(\ln \frac{1}{\epsilon}+2\ln \ln \frac{1}{\epsilon}+6)$ (see Corollary 15.6 of \cite{pach1995combinatorial} for this exact bound).
	Using Lemma \ref{claim:1chap3}, it follows that we can construct a SUR for 
	$\mathcal{B}$ of cardinality $\frac{d}{\epsilon}(\ln \frac{1}{\epsilon}+2\ln \ln \frac{1}{\epsilon}+6)-1$.
	Since any family $\mathcal{C}$ of VC-dimension $d$ has cardinality at least $2^d$,
	this establishes an $O(\log |\mathcal{C}|) = O(\log |\mathcal{B}|)$  upper bound for  the cardinality of any optimal SUR
	under no restriction on set sizes. We state the result as a proposition below.
	
	\begin{proposition}
	Let $0 \leq \epsilon \leq \frac{1}{2}$ be a constant.	
	Let  $\mathcal{B}$ be a family of bicolorings, where $\epsilon n \leq |B(+1)| \leq (1-\epsilon) n$, for each $B \in \mathcal{B}$. Let $\mathcal{C}$ be the family constructed from $\mathcal{B}$ as in Lemma \ref{claim:1chap3}. 
	Let $d$ be the VC-dimension of $\mathcal{C}$. Then, we can construct a SUR for 
	$\mathcal{B}$ of cardinality $\frac{d}{\epsilon}(\ln \frac{1}{\epsilon}+2\ln \ln \frac{1}{\epsilon}+6)-1$.
	\end{proposition}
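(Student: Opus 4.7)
The plan is to prove the proposition by directly assembling the two ingredients already laid out in the paragraph preceding it: an epsilon-net bound for set systems of bounded VC-dimension, and the hitting-set-to-SUR reduction of Lemma \ref{claim:1chap3}. The proof is essentially a three-step chain, and there is no real obstacle; the task is to spell the reasoning out cleanly.

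First, I would observe that every set in $\mathcal{C}$ has large relative measure. Since $\mathcal{C}$ consists of the sets $B(+1)$ and $B(-1)$ as $B$ ranges over $\mathcal{B}$, and since the hypothesis $\epsilon n \le |B(+1)| \le (1-\epsilon)n$ implies both $|B(+1)| \ge \epsilon n$ and $|B(-1)| = n - |B(+1)| \ge \epsilon n$, every $C \in \mathcal{C}$ satisfies $|C| \ge \epsilon n$. In particular, under the uniform probability measure on $[n]$, every member of $\mathcal{C}$ has measure at least $\epsilon$.

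Second, I would invoke the epsilon-net theorem. Since $\mathcal{C}$ has VC-dimension $d$, the Haussler--Welzl bound as sharpened by Koml\'os, Pach, and Woeginger (in the form quoted in Corollary 15.6 of \cite{pach1995combinatorial}) guarantees the existence of an $\epsilon$-net $H \subseteq [n]$ for $\mathcal{C}$ of cardinality at most
\[
\frac{d}{\epsilon}\left(\ln \frac{1}{\epsilon} + 2 \ln\ln \frac{1}{\epsilon} + 6\right).
\]
Because every $C \in \mathcal{C}$ has measure at least $\epsilon$, this $\epsilon$-net $H$ intersects every $C \in \mathcal{C}$, so $H$ is a hitting set for $\mathcal{C}$.

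Third, I would apply Lemma \ref{claim:1chap3} to the hitting set $H = \{h_1, h_2, \ldots\}$. That lemma produces a SUR $\mathcal{A} = \{\{h_1, h_q\} : q > 1\}$ for $\mathcal{B}$ of cardinality $|H| - 1$, which is at most $\frac{d}{\epsilon}\!\left(\ln \frac{1}{\epsilon} + 2\ln\ln \frac{1}{\epsilon} + 6\right) - 1$, establishing the claim. The only mild subtlety worth flagging is that the epsilon-net result is typically stated for set systems on an abstract ground set with a given probability measure; here we use the uniform measure on $[n]$, under which measure and relative cardinality coincide, so the hypotheses of the theorem are immediately met.
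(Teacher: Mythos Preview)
Your proposal is correct and follows essentially the same approach as the paper: the paper's argument (given in the paragraph immediately preceding the proposition) is exactly the three-step chain you outline---observe every $C\in\mathcal{C}$ has size at least $\epsilon n$, apply the Haussler--Welzl/Koml\'os--Pach--Woeginger $\epsilon$-net bound (Corollary 15.6 of \cite{pach1995combinatorial}) to get a hitting set $H$ of the stated size, then invoke Lemma~\ref{claim:1chap3} to obtain a SUR of cardinality $|H|-1$. Your explicit mention of the uniform measure is a helpful clarification but not a departure from the paper's reasoning.
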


	In both Section \ref{subsec:1.1chap3} and \ref{subsec:1.2chap3}, the $O(\log |\mathcal{B}|)$ cardinality SURs contained sets of 
	small sizes (2-sized sets) as well.
	In what follows, we study the problem of SURs made of large cardinality sets.
	In order to obtain a similar $O(\log |\mathcal{B}|)$ bound for such a SUR, we inevitably introduce some error in the representation.
	
	%


	\subsection{Analysis with bias in representation}
	
	Consider the problem of estimation of $\gamma(\mathcal{B})$  for a set of bicolorings in terms of 
	$|\mathcal{B}|$, where (i) the number of +1's in each $B \in \mathcal{B}$ lies in the range $\{\alpha n, \alpha n+1, \ldots, (1-\alpha) n\}$ for some $0 < \alpha < \frac{1}{2}$, and (ii) each set in the SUR is of cardinality exactly $r$, for some $2 \leq r \leq n $.
	Choosing $r$ elements, namely $x_1,\ldots,x_r$, from $[n]$ independently and uniformly at random,
	the probability $p$ that a fixed bicoloring $B \in \mathcal{B}$ does not have $\left\langle Y_B,X_A\right\rangle=0$, where $A=\{x_1,\ldots,x_r\}$, is at most
	\[1-\binom{r}{\frac{r}{2}} \left(\frac{\alpha n}{n} \right)^{\frac{r}{2}} \left(\frac{(1-\alpha) n}{n} \right)^{\frac{r}{2}} \leq 1-C\frac{2^r}{\sqrt{r}}{\alpha}^{\frac{r}{2}}(1-\alpha)^{\frac{r}{2}} < e^{-C\frac{2^r}{\sqrt{r}}{\alpha}^{\frac{r}{2}}(1-\alpha)^{\frac{r}{2}}}, \text{where $C=\frac{1}{\sqrt{\pi}}$}.\]
	
	Let $\mathcal{A}$ be constructed by choosing $t$ $r$-element sets into $\mathcal{A}$ independently,
	where each $r$-element set is chosen as described above.
	Using union bound, 	the probability that some $B \in \mathcal{B}$ has $\left\langle Y_B,X_A\right\rangle\neq 0$ for all $A \in \mathcal{A}$, is $|\mathcal{B}|(e^{-C\frac{2^r}{\sqrt{r}}{\alpha}^{\frac{r}{2}}(1-\alpha)^{\frac{r}{2}}})^t$.
	This gives an upper bound of $\frac{\sqrt{r}}{C 2^r{\alpha}^{\frac{r}{2}}(1-\alpha)^{\frac{r}{2}}} \ln(|\mathcal{B}|)$ for $|\mathcal{A}|$. Using Proposition \ref{prop:nkr1chap3}, the case when $k=\frac{n}{2}$ and $r=2$ yields a asymptotically tight example for this upper bound.
	We have the following proposition.
	
	\begin{proposition}
		Let $\mathcal{B}$ denote a set of bicolorings, where  the number of +1's in each $B \in \mathcal{B}$ lie in the range $\{\alpha n, \alpha n+1, \ldots, (1-\alpha) n\}$ for some $0 < \alpha < \frac{1}{2}$. Let $\mathcal{A}$ denote a minimum cardinality SUR
		for $\mathcal{B}$, where each $A \in \mathcal{A}$ has cardinality exactly $r$. Then,
		\begin{align}
		|\mathcal{{A}}| \leq \frac{\sqrt{r}}{C2^r{\alpha}^{\frac{r}{2}}(1-\alpha)^{\frac{r}{2}}} \ln(|\mathcal{B}|),  
		\label{ineq:chap3:just}
		\end{align}
		where $C=\frac{1}{\sqrt{\pi}}.$
		\label{prop:chap3justchap3}
	\end{proposition}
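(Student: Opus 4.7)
The plan is a direct probabilistic existence argument by the first-moment/union-bound method, formalizing the sketch given in the paragraphs immediately preceding the statement. I would construct a random family $\mathcal{A}$ consisting of $t$ independently drawn $r$-element subsets of $[n]$ (with $t$ to be determined), show that with positive probability every $B \in \mathcal{B}$ admits an unbiased representative in $\mathcal{A}$, and conclude that an optimal SUR has cardinality at most $t$.

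Fix $B \in \mathcal{B}$ and set $k = |B(+1)|$, so $\alpha n \leq k \leq (1-\alpha)n$. Drawing $r$ points of $[n]$ independently and uniformly at random and letting $A$ denote the resulting subset, the event $\langle Y_B, X_A\rangle = 0$ coincides with the event that exactly $r/2$ of the drawn points land in $B(+1)$, which has probability $\binom{r}{r/2}(k/n)^{r/2}((n-k)/n)^{r/2}$. Since $x(1-x)$ is minimized on $[\alpha,1-\alpha]$ at the endpoints, we have $(k/n)((n-k)/n) \geq \alpha(1-\alpha)$, and Stirling's approximation yields $\binom{r}{r/2} \geq (1/\sqrt{\pi})\cdot 2^r/\sqrt{r}$. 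Combining these two bounds,
\[
\Pr\bigl[\langle Y_B, X_A\rangle = 0\bigr] \;\geq\; \frac{C\cdot 2^r}{\sqrt{r}}\,\alpha^{r/2}(1-\alpha)^{r/2}, \qquad C = \tfrac{1}{\sqrt{\pi}}.
\]
Applying $1-x \leq e^{-x}$, the probability that a single random $r$-set fails to be an unbiased representative of $B$ is at most $\exp\!\bigl(-C\cdot 2^r\,\alpha^{r/2}(1-\alpha)^{r/2}/\sqrt{r}\bigr)$.

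Taking $t$ such $r$-sets independently and applying a union bound over $\mathcal{B}$, the probability that some $B \in \mathcal{B}$ has no unbiased representative in $\mathcal{A}$ is at most
\[
|\mathcal{B}|\cdot \exp\!\Bigl(-t\cdot \frac{C\cdot 2^r}{\sqrt{r}}\,\alpha^{r/2}(1-\alpha)^{r/2}\Bigr).
\]
Setting this quantity strictly less than $1$ and solving for $t$ yields precisely the threshold $t \geq \frac{\sqrt{r}}{C\cdot 2^r\,\alpha^{r/2}(1-\alpha)^{r/2}}\ln|\mathcal{B}|$, which certifies the existence of a SUR, and hence an optimal SUR, of the claimed cardinality. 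The only mild obstacle is the sampling model: independent sampling produces a multiset of size at most $r$ rather than necessarily a set of size exactly $r$. This is handled either by switching to uniformly random $r$-subsets (so the binomial probability is replaced by a hypergeometric one, whose leading-order behaviour matches the binomial estimate in the regime considered), or by simply adopting the independent-sampling model already used in the preamble of the statement; in either case the stated constant $C = 1/\sqrt{\pi}$ is unaffected.
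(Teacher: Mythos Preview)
Your proposal is correct and follows essentially the same approach as the paper: independent uniform sampling of $r$ points, the binomial lower bound $\binom{r}{r/2}(k/n)^{r/2}((n-k)/n)^{r/2}$ combined with Stirling and the fact that $x(1-x)$ is minimized at the endpoints of $[\alpha,1-\alpha]$, then $1-x\le e^{-x}$ and a union bound over $\mathcal{B}$ to choose $t$. Your explicit remark on the multiset-versus-set issue is a detail the paper leaves implicit, but otherwise the arguments coincide.
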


When $\alpha=\frac{1}{2}-\epsilon$, for some $0 \leq \epsilon < \frac{1}{2}$, Inequality \ref{ineq:chap3:just} becomes
\begin{align}
|\mathcal{{A}}| \leq \frac{\sqrt{r}}{C(1-4 \epsilon^2)^{\frac{r}{2}}} \ln(|\mathcal{B}|).
\label{ineq:chap3:just1chap3}
\end{align}

Using the fact that $(1- \frac{1}{m+1})^m \geq \frac{1}{e}$,
the right hand term is at most $e^{(\frac{4 \epsilon^2}{1-4 \epsilon^2})\frac{r}{2}}\sqrt{\pi r}\ln |\mathcal{B}|$.
Therefore, when $r \in O(1)$, we have an $O(\ln |\mathcal{B}|)$ upper bound for any optimal SUR consisting of $r$ sized sets for $\mathcal{B}$. However, if $r$ is any increasing function in $n$, the upper bound given by Proposition \ref{prop:chap3justchap3} is large (even if $\epsilon = \frac{1}{n}$, the term $\frac{\sqrt{r}}{C(1-4 \epsilon^2)^{\frac{r}{2}}} \ln(|\mathcal{B}|)$ is $\Omega(\sqrt{r}\ln |\mathcal{B}|)$).
For large values of $r$, in order to obtain an $O(\ln(|\mathcal{B}|))$ upper bound for $|\mathcal{A}|$, one may allow some error in representation studied in the following section.
Let $\mathcal{B}$ denote the set of all bicolorings $B \in \{-1,+1\}^n$,
where $|B(+1)-B(-1)| \leq d$, for some $d \in \mathbb{N}$.
Our problem is to find a small sized family $\mathcal{A}$ for $\mathcal{B}$ such that
\begin{enumerate}
	\item each $A \in \mathcal{A}$ is reasonably large;
	\item for every $B \in \mathcal{B}$, there exists a set $A \in \mathcal{A}$ such that
	$|\left\langle Y_B,X_A\right\rangle| \leq \Delta$, where $\Delta=\Delta(r,d,n)$ is as small as possible.
\end{enumerate}
	
	\subsubsection*{Proof of Theorem \ref{thm:biasedchap3}}
	
	\begin{statement}[Statement of Theorem \ref{thm:biasedchap3}]
		Let $r' \in [r \pm \lceil\frac{r}{2}\rceil]$, where $r \geq 8$ is an integer.		
		Let $\mathcal{B}$ denote the set of all bicolorings $B \in \{-1,+1\}^n$,
		where $|B(+1)-B(-1)| \leq d$, for some $d \in \mathbb{N}$.
		Then, with high probability, one can construct a family $\mathcal{A}$ of cardinality at most $\ln |\mathcal{B}|$
		in $O(n|\mathcal{B}|\ln |\mathcal{B}|)$ time 
		consisting of $r'$-sized subsets such that for every $B \in \mathcal{B}$, there exists a set $A \in \mathcal{A}$ with
		$|\left\langle Y_B,X_A\right\rangle| \leq e\sqrt{r}+\frac{dr}{n}$.
	\end{statement}
	
	\begin{proof}
		
		We construct a set $A \subset [n]$ of size $r'\in [r \pm \lceil\frac{r}{2}\rceil]$ by picking each element of $[n]$ into $A$ independently with probability $\frac{r}{n}$.
		Let $X_A=(a_1,\ldots,a_n)$ denote the corresponding random vector
		where each $a_i \in \{0,1\}$.
		Note that $|A|=\sum_{i=1}^n a_i$.
		So, using linearity of expectation,
		$(\mu=)\mathbb{E}[|A|]=\sum_{i=1}^n \mathbb{E}[a_i]=r$.
		Moreover, since $a_i$'s are independent,
		$Var[|A|]=\sum_{i=1}^n Var[a_i]=r(1-\frac{r}{n})$.
		So, using the following form of Chernoff's bound 
		$P\left(|X-\mu|> \Delta \mu\right)  < (\frac{e^{\Delta}}{(1+\Delta)^{(1+\Delta)}})^{\mu} + (\frac{e^{-\Delta}}{(1-\Delta)^{(1-\Delta)}})^{\mu}$, we get,
		$P\left(|\sum_{i=1}^n a_i - r| > 0.5 r \right) < 0.72$, for $r \geq 8$.
		So, we can sample a family  $\mathcal{A}$ of cardinality $t$ ($t$ to be chosen later)
		consisting of sets of size $r'\in [r \pm \frac{r}{2}]$.
		
		Let $B \in \mathcal{B}$ be a bicoloring,
		where $B(+1)-B(-1) = d_1$, where $-d \leq d_1 \leq d$.
		Let $Y_B=(b_1,\ldots,b_n)$ denote the corresponding bit vector, where
		each $b_i \in \{-1,+1\}$.
		Let $Y=\left\langle Y_B,X_A\right\rangle$.
		Since $Y=\sum_{i=1}^n a_ib_i$,
		$Y$ becomes a random variable (note that $a_ib_i$ can take values $\{-1,0,1\}$ and are independent).
		So, $\mathbb{E}[Y]=\sum_{i=1}^n b_i\mathbb{E}[a_i]=\frac{d_1r}{n}$.
		It follows that $Var[Y]=\sum_{i=1}^n b_i^2 Var[a_i]=r(1-\frac{r}{n})$.
		So, using Chebyshev's inequality, we get,
		$P\left(|Y - \frac{d_1 r}{n}| \geq e \sqrt{r}\right) \leq \frac{1}{e^2}(1-\frac{r}{n}) < \frac{1}{e^2}$.
		That is, the probability that $|\langle Y_B, X_A\rangle| > \frac{d_1 r}{n}+ e \sqrt{r}$ is at most $\frac{1}{e^2}$.
		Let $E$ denote the bad event that some $B \in \mathcal{B}$ has $|\left\langle Y_B,X_A\right\rangle| > \frac{d r}{n}+e\sqrt{r}$ for all $A \in \mathcal{A}$.
		Using union bound, $P\left(E\right) \leq |\mathcal{B}|(\frac{1}{e^2})^t$.
		Setting $|\mathcal{B}|(\frac{1}{e^2})^t$ to  at most $\frac{1}{2}$,
		we get, $t \geq \ln |\mathcal{B}|$.
		
		Independently choose $100 t$ subsets of $[n]$ (call this collection $\mathcal{D}$), where each $D \in \mathcal{D}$ is constructed by picking an element of $[n]$ independently with probability $\frac{r}{n}$.
		Let $\mathcal{C} \subseteq \mathcal{D}$ be the sub-collection of $r'$-sized subsets in $\mathcal{D}$, where $r' \in \lceil\frac{r}{2}\rceil$.
		Then, $E[|\mathcal{C}|] \geq 28t$. Since $Var[|\mathcal{C}|] \leq 25t$, with high probability, $|\mathcal{C}| \geq 10t$.
		Partition $\mathcal{C}$ into $t$-sized sets.
		With high probability, one of the parts will form our desired family $\mathcal{A}$ that is a SUR (with restricted error)
		for $\mathcal{B}$.
		\qed
	\end{proof}
	
\emph{\noindent Comparison between Theorem \ref{thm:biasedchap3} and Proposition \ref{prop:chap3justchap3}:} 
Expressing $d$ in Theorem \ref{thm:biasedchap3} in terms of $\alpha$ in Proposition \ref{prop:chap3justchap3}, $(1-2\alpha)n=d$.
So, $\epsilon=\frac{1}{2}-\alpha=\frac{d}{2n}$. Substituting this value of $\epsilon$ in Inequality \ref{ineq:chap3:just1chap3},
we get a SUR of cardinality $\Omega(\sqrt{r}\ln |\mathcal{B}|)$ with no error for $\mathcal{B}$.


	\section{When cardinalities of sets  in the `SUR' and +1's in the bicolorings are restricted}
	\label{sec:2chap3}
	For any $k$-bicoloring $B$ of $[n]$, and any $A \subseteq [n]$,
	if $A$ is an unbiased representative for $B$, then $2 \leq |A| \leq 2k$:
	otherwise,  $\left\langle Y_B,X_A\right\rangle \neq 0$.
	Recall that $\gamma(n,k,r)= \gamma(\mathcal{B})$,
	where 
	(i) $\mathcal{B}$ is the collection of the  $\binom{[n]}{k}$ distinct $k$-bicolorings,
	(ii) $\gamma(\mathcal{B})$ is the cardinality of an optimal
	SUR $\mathcal{A}$ for $\mathcal{B}$, and,
	(iii) each $A \in \mathcal{A} $ has cardinality exactly $r$.
	We have the following propositions.
	
	\begin{proposition}
		$ \max(\lceil\frac{n-k}{r} \rceil, \lceil\frac{k}{r} \rceil) \leq \gamma(n,k,r)$.
		\label{prop:nkr1chap3}
	\end{proposition}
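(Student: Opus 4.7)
The plan is to prove both lower bounds $\lceil\frac{k}{r}\rceil \leq \gamma(n,k,r)$ and $\lceil\frac{n-k}{r}\rceil \leq \gamma(n,k,r)$ by essentially the same ``pigeonhole on the union'' argument: if the SUR is too small, then the union of its member sets is too small to force any given bicoloring to be split evenly, and we can exhibit a $k$-bicoloring that agrees completely with every member of the SUR (either all $+1$ on them or all $-1$ on them).

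For the bound $\gamma(n,k,r) \geq \lceil\frac{k}{r}\rceil$, I would assume for contradiction that a SUR $\mathcal{A}$ of $r$-sized sets with $|\mathcal{A}| < \lceil\frac{k}{r}\rceil$ exists. Then $|\mathcal{A}| \cdot r < k$, so $|\bigcup_{A\in\mathcal{A}} A| \leq |\mathcal{A}| \cdot r < k \leq n$, and I can enlarge $\bigcup_{A\in\mathcal{A}} A$ to a set $S \subseteq [n]$ with $|S| = k$. Defining the $k$-bicoloring $B$ by $B(+1) = S$, every $A \in \mathcal{A}$ satisfies $A \subseteq B(+1)$, whence $\langle X_A, Y_B\rangle = |A| = r \neq 0$. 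So no $A \in \mathcal{A}$ is an unbiased representative for $B$, contradicting the SUR property.

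The bound $\gamma(n,k,r) \geq \lceil\frac{n-k}{r}\rceil$ is obtained symmetrically: assuming $|\mathcal{A}| \cdot r < n-k$, enlarge $\bigcup_{A\in\mathcal{A}} A$ to a set $T$ of size $n-k$, and define $B$ by $B(-1) = T$, i.e., $B(+1) = [n]\setminus T$, which has size $k$. Now every $A \in \mathcal{A}$ lies entirely inside $B(-1)$, giving $\langle X_A, Y_B\rangle = -r \neq 0$, the same contradiction. Taking the maximum of the two bounds yields the proposition.

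There is essentially no obstacle: the only thing to check is that the enlargement step is legal, which follows from $|\bigcup A| < k \leq n$ in the first case and $|\bigcup A| < n-k \leq n$ in the second. The proof will be quite short.
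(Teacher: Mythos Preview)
Your proposal is correct and follows essentially the same approach as the paper: both arguments observe that if $|\mathcal{A}|\cdot r$ is too small, the union $\bigcup_{A\in\mathcal{A}} A$ leaves room for a monochromatic $k$-bicoloring on which every $A$ lies entirely in one color class. The only cosmetic difference is that the paper handles just one case (assuming $k\le\lfloor n/2\rfloor$, which makes $\lceil (n-k)/r\rceil$ the maximum) and phrases the construction as ``choose a $k$-set disjoint from the union'' rather than your equivalent ``enlarge the union to an $(n-k)$-set and call it $B(-1)$''.
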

	
	\begin{proof}
		Consider the case when $k \leq \lfloor\frac{n}{2} \rfloor$. Given a SUR $\mathcal{A}$ of cardinality $\lfloor\frac{n-k}{r} \rfloor$ consisting of $r$-sized subsets, there exists a $k$-sized subset (say, $S$) of $[n]$ that is completely disjoint from the union of these $r$-sized subsets. The bicoloring with the points in $S$ colored +1 and the points in $[n] \setminus S$ colored -1 	does not have any unbiased representative in $\mathcal{A}$.
		\qed
	\end{proof}
	
	\begin{proposition}
		$ \frac{2}{r(r-1)} \gamma(n,k-1,r-2) \leq \gamma(n,k,r) \leq (n-r+1)\gamma(n,k-1,r-2)$, for $r \geq 4$.
		\label{prop:nkr2chap3}
	\end{proposition}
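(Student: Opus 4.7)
The plan is to derive the two bounds by interchanging between SURs for $k$-bicolorings (with $r$-sized sets) and SURs for $(k-1)$-bicolorings (with $(r-2)$-sized sets) via removal or extension of pairs.

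For the lower bound, I start with an optimal SUR $\mathcal{A}$ for $k$-bicolorings with $r$-sized sets and form $\mathcal{A}' = \bigcup_{A \in \mathcal{A}} \binom{A}{r-2}$, so $|\mathcal{A}'| \leq \binom{r}{2}|\mathcal{A}| = \frac{r(r-1)}{2}\gamma(n,k,r)$. To check that $\mathcal{A}'$ is a SUR for $(k-1)$-bicolorings, I take an arbitrary $(k-1)$-bicoloring $B'$ with $B'(+1)=S'$, fix some $p \in [n] \setminus S'$, and extend $B'$ to the $k$-bicoloring $B$ with $B(+1) = S' \cup \{p\}$. By the SUR property, there is $A \in \mathcal{A}$ unbiased for $B$, i.e. $|A \cap (S' \cup \{p\})|=r/2$. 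A short case analysis on whether $p \in A$ (in which case I remove $p$ and one element of $A \cap ([n] \setminus S' \setminus \{p\})$) or $p \notin A$ (in which case I remove one element of $A \cap S'$ and one of $A \cap ([n] \setminus S')$) exhibits a 2-subset of $A$ whose removal leaves an $(r-2)$-set $A'' \in \mathcal{A}'$ with $|A'' \cap S'| = |A'' \cap ([n] \setminus S')| = r/2 - 1$, serving as the required unbiased representative for $B'$.

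For the upper bound, dually, I start with an optimal SUR $\mathcal{A}'$ for $(k-1)$-bicolorings with $(r-2)$-sized sets and extend each $A' \in \mathcal{A}'$ by adjoining pairs from $[n] \setminus A'$ (a set of size $n-r+2$). Concretely, I attach to each $A'$ a carefully chosen family $\mathcal{P}_{A'}$ of $n-r+1$ pairs, e.g.\ the edge set of a Hamiltonian path on $[n] \setminus A'$ or a star centered at a fixed vertex of $[n] \setminus A'$, and take $\mathcal{A} = \{A' \cup P : A' \in \mathcal{A}',\ P \in \mathcal{P}_{A'}\}$, so that $|\mathcal{A}| \leq (n-r+1)\gamma(n,k-1,r-2)$. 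To verify the SUR property for a $k$-bicoloring $B$ with $B(+1) = S$, I pick any $p \in S$, choose $A' \in \mathcal{A}'$ unbiased for the $(k-1)$-bicoloring $S \setminus \{p\}$ (so $|A' \cap S| \in \{r/2 - 1,\, r/2\}$ depending on whether $p \in A'$), and select a pair $P \in \mathcal{P}_{A'}$ whose intersection with $S$ contributes exactly the count needed to make $|(A' \cup P) \cap S| = r/2$.

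The main obstacle will be the upper-bound verification: ensuring that the $n-r+1$ pairs per $A'$ suffice, despite being a small fraction of the $\binom{n-r+2}{2}$ possible pairs. This forces one to exploit the combinatorial structure of the chosen pair family (for instance, a Hamiltonian path on $[n] \setminus A'$ guarantees a bichromatic edge whenever both $\pm 1$ colors occur in $[n] \setminus A'$) together with the freedom in choosing $p \in S$ and in selecting which $A' \in \mathcal{A}'$ to use, so that the nearly-monochromatic borderline cases are covered within the $n-r+1$ budget. The lower bound, by contrast, is a routine verification once the subset-extraction construction is written down.
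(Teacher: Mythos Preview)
Your approach matches the paper's: both prove the lower bound by taking all $\binom{r}{r-2}$ subsets of each set in an optimal $r$-uniform SUR, and both prove the upper bound via the star construction (the paper uses the pairs $\{x_1,x_j\}$ on $[n]\setminus A'$, which is precisely your ``star'' option). Your case analysis for the lower bound is correct and more explicit than the paper's one-line sketch. For the upper bound you correctly flag the delicate point---when $p\in A'$ and the star center $x_1(A')$ lies in $S$, no star edge has $|P\cap S|=0$---whereas the paper simply asserts ``from the construction, it follows that there is at least one $A$\ldots''; so your proposal is in fact at least as careful as the paper's own argument at this step.
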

	See Appendix \ref{app:1} for a proof of Proposition \ref{prop:nkr2chap3}.
%

	A simple averaging argument gives the following lower bound. 
	\begin{align}
	\gamma(n,k,r) \geq  \frac{\binom{n}{k}} {\binom{r}{\frac{r}{2}} \binom{n-r}{k-\frac{r}{2}}}.
	\label{ineq:nkrlowchap3}
	\end{align}

	To establish an upper bound, 
	we reduce this problem to a covering problem and then make use of a result by Lov\'{a}sz and Stein \cite{STEIN1974391,LOVASZ1975383}.
	\begin{defn}
		Given a family $\mathcal{F}$ of subsets of some finite set $X$, the cover number $Cov(\mathcal{F})$ of $\mathcal{F}$
		is the minimum number of members of $\mathcal{F}$ whose union includes all the points in $X$.
	\end{defn}

	\begin{theorem}\cite{STEIN1974391,LOVASZ1975383,jukna2011extremal}
		If each member of $\mathcal{F}$ covers at most $a$ elements and each element in  $X$ is covered by at least $v$
		members of $\mathcal{F}$, then
		\[Cov(\mathcal{F}) \leq \frac{|\mathcal{F}|}{v}(1+\ln a).\]
		\label{thm:lov-steinchap3}
	\end{theorem}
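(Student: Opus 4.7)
The plan is to analyse the standard greedy set-cover algorithm and then bring in the parameter $v$ via a double-counting argument. Specifically, repeatedly select a set $F \in \mathcal{F}$ that covers the maximum number of currently uncovered elements of $X$, until $X$ is covered. Let $F_1, F_2, \ldots, F_T$ be the sequence of selected sets and let $k_i$ denote the number of elements first covered by $F_i$. Assign a charge $c(x) = 1/k_i$ to each element $x$ newly covered at step $i$; then $\sum_{x \in X} c(x) = T$ by construction, so bounding $T$ reduces to bounding the total charge.

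The central observation is that for every $F \in \mathcal{F}$, the total charge placed on the elements of $F$ satisfies $\sum_{x \in F} c(x) \leq \sum_{j=1}^{|F|} \tfrac{1}{j} \leq 1 + \ln a$. To prove this, order the elements of $F$ as $x_1, x_2, \ldots, x_{|F|}$ in the order they get covered by the greedy process. When $x_j$ is first covered, say at step $i$, the set $F$ still contains the $|F|-j+1$ elements $x_j, x_{j+1}, \ldots, x_{|F|}$ among the uncovered points; hence $F$ is a candidate covering at least $|F|-j+1$ uncovered elements at that step, so by the greedy rule $k_i \geq |F|-j+1$ and therefore $c(x_j) \leq 1/(|F|-j+1)$. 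Summing over $j$ gives the harmonic bound.

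Finally, combine the two estimates via a double count of $\sum_{F \in \mathcal{F}} \sum_{x \in F} c(x)$. Summing by sets first and applying the harmonic bound gives a total of at most $|\mathcal{F}|(1 + \ln a)$. Summing by elements first and using the hypothesis that each element of $X$ lies in at least $v$ sets of $\mathcal{F}$ gives $\sum_{x \in X} c(x) \cdot |\{F \in \mathcal{F} : x \in F\}| \geq v\sum_{x \in X} c(x) = vT$. Rearranging yields $T \leq |\mathcal{F}|(1 + \ln a)/v$, which is exactly the stated bound.

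I expect the main obstacle to be the clean verification of the greedy inequality $k_i \geq |F|-j+1$ underpinning the harmonic lemma; once that is established, the remainder is routine double counting together with the standard estimate $\sum_{j=1}^{a} \tfrac{1}{j} \leq 1 + \ln a$. The hypothesis that each element is covered at least $v$ times enters the argument only at the very last step, which is the precise place where the factor $1/v$ appears.
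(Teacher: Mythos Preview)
Your argument is correct: it is the classical Chv\'atal--Lov\'asz--Stein greedy analysis, and the double-count at the end cleanly introduces the parameter $v$. One small point worth stating explicitly is how ties are handled in the ordering $x_1,\ldots,x_{|F|}$ when several elements of $F$ are first covered at the same greedy step; the inequality $k_i \geq |F|-j+1$ still holds in that case, but it deserves a sentence.

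As for comparison with the paper: the paper does not supply a proof of this theorem at all. It is quoted as a known result with citations to Stein~\cite{STEIN1974391}, Lov\'asz~\cite{LOVASZ1975383}, and Jukna's textbook, and is then used as a black box in the proof of Theorem~\ref{thm:nkr1chap3}. So there is no ``paper's own proof'' to compare against; your write-up simply fills in the standard argument that the authors chose to omit.
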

	We have the following theorem.
	
\begin{theorem}
	\label{thm:nkr1chap3}
	Let $n$ be an integer, $r,k \in [n]$, $2 \leq r \leq 2k$ and $r$ is even. Then,
	\[\frac{\binom{n}{k}} {\binom{r}{\frac{r}{2}} \binom{n-r}{k-\frac{r}{2}}} \leq \gamma(n,k,r) \leq \frac{\binom{n}{k}}{{\binom{r}{\frac{r}{2}} \binom{n-r}{k-\frac{r}{2}}}} 
	\left(1+  0.7r + \ln (\binom{n-r}{k-\frac{r}{2}}) \right).\]
\end{theorem}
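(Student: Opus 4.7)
The lower bound $\binom{n}{k}/[\binom{r}{r/2}\binom{n-r}{k-r/2}]$ is already recorded as Inequality \ref{ineq:nkrlowchap3} (double counting: each $r$-set is an unbiased representative for at most $\binom{r}{r/2}\binom{n-r}{k-r/2}$ of the $\binom{n}{k}$ $k$-bicolorings), so the real work is the upper bound, and the plan is to encode the SUR problem as a covering problem and invoke Theorem \ref{thm:lov-steinchap3}.

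The set-up I would use: take $X := \binom{[n]}{k}$, viewed as the set of ``support patterns'' $B(+1)$ for the $k$-bicolorings; for each $r$-subset $A \subseteq [n]$ define
\[
F_A := \bigl\{ K \in \tbinom{[n]}{k} : |A \cap K| = r/2 \bigr\},
\]
and let $\mathcal{F} := \{F_A : A \in \binom{[n]}{r}\}$. An $r$-set $A$ is an unbiased representative for the $k$-bicoloring whose $+1$-support is $K$ precisely when $K \in F_A$, so an SUR of $r$-sized sets for the family of all $k$-bicolorings is exactly the same thing as a cover of $X$ by members of $\mathcal{F}$. Thus $\gamma(n,k,r) = \operatorname{Cov}(\mathcal{F})$.

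Next I would compute the two parameters needed by Theorem \ref{thm:lov-steinchap3}. Choosing which $r/2$ elements of $A$ and which $k-r/2$ elements of $[n]\setminus A$ sit in $K$ gives $a := |F_A| = \binom{r}{r/2}\binom{n-r}{k-r/2}$ for every $A$, so every member of $\mathcal{F}$ covers the same number $a$ of points. Dually, the number of $r$-sets $A$ with $|A\cap K|=r/2$, for a fixed $K$, is $v := \binom{k}{r/2}\binom{n-k}{r/2}$, again independent of $K$. Plugging these into Theorem \ref{thm:lov-steinchap3} gives
\[
\gamma(n,k,r) \;\le\; \frac{\binom{n}{r}}{\binom{k}{r/2}\binom{n-k}{r/2}}\bigl(1+\ln a\bigr).
\]
A direct factorial manipulation (multiplying numerator and denominator of the prefactor by $(r/2)!^2\,(k-r/2)!\,(n-k-r/2)!$) shows
\[
\frac{\binom{n}{r}}{\binom{k}{r/2}\binom{n-k}{r/2}} \;=\; \frac{\binom{n}{k}}{\binom{r}{r/2}\binom{n-r}{k-r/2}},
\]
so the prefactor matches the one in the theorem statement.

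The last simplification is the logarithmic factor: $\ln a = \ln\binom{r}{r/2} + \ln\binom{n-r}{k-r/2}$, and using the crude bound $\binom{r}{r/2}\le 2^r$ we get $\ln\binom{r}{r/2}\le r\ln 2 < 0.7\,r$, which absorbs the central binomial factor into the $0.7r$ term and yields the claimed bound. I do not anticipate a genuine obstacle; the only subtlety is verifying the combinatorial identity equating the two forms of the prefactor, which, as shown above, is a routine factorial computation. Note also that the regularity of $\mathcal{F}$ (every point covered by exactly $v$ sets, every set covering exactly $a$ points) is what makes Theorem \ref{thm:lov-steinchap3} give a clean bound here.
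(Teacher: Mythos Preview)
Your proposal is correct and matches the paper's proof essentially step for step: the same covering set-up (Construction~\ref{const:1chap3}), the same computation of $a$ and $v$, the same application of Theorem~\ref{thm:lov-steinchap3}, and the same identity converting $\binom{n}{r}/[\binom{k}{r/2}\binom{n-k}{r/2}]$ to $\binom{n}{k}/[\binom{r}{r/2}\binom{n-r}{k-r/2}]$ (the paper phrases this as a double count of $(B,A)$ pairs rather than a factorial manipulation, but it is the same identity). You are in fact more explicit than the paper in justifying the $0.7r$ term via $\ln\binom{r}{r/2}\le r\ln 2$.
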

	
	\begin{proof}
		Consider the following construction of a uniform family of  subsets based on the $\binom{n}{[k]}$ distinct $k$-bicolorings
		and $\binom{n}{r}$ distinct $r$-sized subsets of $[n]$.
			\begin{construction} 
			Corresponding to each distinct $k$-bicoloring  $B$ in $\binom{[n]}{k}$, we add a point $v_B$ to $X$.
			Corresponding to each distinct $r$-sized subset  $A$ in $\binom{[n]}{r}$, we add a set $e_A$ to $\mathcal{F}$,
			where $e_A$ is the collections of all $v_B$'s such that $\left< X_A,Y_B \right>=0$. So, 
			$e_A$ `covers' $v_B$ if and only if $v_B \in e_A$.
			\label{const:1chap3}
		\end{construction}
		So,  $|X|=\binom{n}{k}$, $|\mathcal{F}|=\binom{n}{r}$.
		Clearly, $a={\binom{r}{\frac{r}{2}} \binom{n-r}{k-\frac{r}{2}}}$,
		$v=\binom{k}{\frac{r}{2}} \binom{n-k}{\frac{r}{2}}$.
		It follows from the construction that $\gamma(n,k,r) \leq Cov(\mathcal{F})$.
		So, from Theorem \ref{thm:lov-steinchap3}, we have
		
		\begin{align}
		\gamma(n,k,r) \leq \frac{\binom{n}{r}} { {\binom{k}{\frac{r}{2}} \binom{n-k}{\frac{r}{2}}} }
		\left(1+ \ln({\binom{r}{\frac{r}{2}} \binom{n-r}{k-\frac{r}{2}}}) \right).
		\label{ineq:lov-steinchap3}
		\end{align}
		
		%
		Double counting $(B,A)$ pairs, where $B$ is a $k$-bicoloring and $A$ is a $r$-sized subset that covers $B$, we get
		
		\begin{align}
		\binom{n}{k}\binom{k}{\frac{r}{2}} \binom{n-k}{\frac{r}{2}}=\binom{n}{r}{\binom{r}{\frac{r}{2}} \binom{n-r}{k-\frac{r}{2}}}.
		\label{ineq:lov-stein1chap3}
		\end{align}
		
		
		Combining Inequalities \ref{ineq:lov-steinchap3} and \ref{ineq:lov-stein1chap3}, and from Inequality \ref{ineq:nkrlowchap3}, 
		Theorem \ref{thm:nkr1chap3} follows.
		\qed
	\end{proof}

	Since Lov\'{a}sz-Stein method is deterministic and constructive, 
	the above reduction gives a deterministic polynomial time algorithm for obtaining a SUR.
	Moreover, from Theorem \ref{thm:nkr1chap3}, it follows that $\gamma(n,k,r)$ is $O(k\ln n)$ approximable 
	($k+0.2r+(k-\frac{r}{2})\ln(\frac{n-r}{k-\frac{r}{2}})$  to be precise) and when 
	$k=\frac{r}{2}$, the approximation factor becomes $O(r)$ ($1+0.7r$ to be exact).
	However, if $k \leq \log_4 \log_4 (n^{0.5-\epsilon})$ and $r=2k$, for some $0<\epsilon < 0.5$, then this upper bound can be improved further.
	
	\subsection{Tight upper bounds under restrictions}

	From Construction \ref{const:1chap3}, it is clear that the approximation factor for  $\gamma(n,k,r)$
	in Theorem \ref{thm:nkr1chap3} comes as a consequence of the approximation factor for  the cover number given by Lov\'{a}sz-Stein
	Theorem. So, tighter bounds for  the cover number should translate into tighter bounds for $\gamma(n,k,r)$.
	Let $v(B,D)$ denote the number of $r$-sized sets that are unbiased representatives for both $B$ and $D$, for any
	pair $(B,D)$ of $k$-bicolorings, where $B \neq D$.
	Let $v_{pair}=\displaystyle\max_{\substack{B,D\in \binom{[n]}{k},\\ B \neq D}} v(B,D)$.
	R\"{o}dl nibble method \cite{Rodlnibble1985,alon2004probabilistic} establishes asymptotically tight bounds for 
	the cover number provided the uniformity $a$ of the family $\mathcal{F}$ in Construction 
	\ref{const:1chap3} is fixed, $v \rightarrow \infty$, and $v_{pair} \in o(v)$.
	Alon et al. \cite{Alon2003} relaxed the condition to $a=o(\log v)$ provided $v_{pair} \in o(\frac{v}{e^{2a}\log v})$.
	In the estimation of $\gamma(n,k,r)$, if  $k \leq \log_4 \log_4 (n^{0.5-\epsilon})$ and $r=2k$, for any $0 < \epsilon < 0.5$, using  Construction \ref{const:1chap3}, it follows that $a < 2^r \in O(\log n)$ and $\log n \in o(\log v)$. 
	So, in order to 
	prove Theorem  \ref{thm:nkrtightchap3},
	it suffices to show that $v_{pair} \in o(\frac{v}{e^{2a}\log v})$.
	
	\begin{lemma}
	$v_{pair} \in o(\frac{v}{e^{2a}\log v})$, when $r=2k$ and $k \leq \log_4 \log_4 (n^{0.5-\epsilon})$, for any $0 < \epsilon < 0.5$.
	\label{lemma:nkrtightchap3}
	\end{lemma}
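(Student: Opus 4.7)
\bigskip

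\noindent\textbf{Proof proposal.} The plan is to compute $v$, $v_{pair}$ and $a$ in closed form when $r = 2k$, and then show directly that the resulting ratio vanishes under the hypothesis on $k$.

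First I would observe that when $r = 2k$, an $r$-sized set $A$ is an unbiased representative of a $k$-bicoloring $B$ if and only if $|A \cap B(+1)| = k = |B(+1)|$, equivalently $B(+1) \subseteq A$, and then the remaining $k$ elements of $A$ lie in $B(-1)$. Hence for a fixed $B$, the number of unbiased representatives is $v = \binom{n-k}{k}$, and by Construction~\ref{const:1chap3} we also have $a = \binom{2k}{k}\binom{n-2k}{0} = \binom{2k}{k}$.

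Next, for distinct $k$-bicolorings $B, D$, an $r$-set $A$ is an unbiased representative of both iff $B(+1) \cup D(+1) \subseteq A$. Writing $s = |B(+1) \cap D(+1)|$, so $|B(+1) \cup D(+1)| = 2k - s$ with $0 \leq s \leq k-1$, the number of such $A$ equals $\binom{n-2k+s}{s}$, which is increasing in $s$. Thus $v_{pair} = \binom{n-k-1}{k-1}$, attained at $s = k-1$, and a short binomial simplification gives
\[
\frac{v_{pair}}{v} \;=\; \frac{\binom{n-k-1}{k-1}}{\binom{n-k}{k}} \;=\; \frac{k}{n-k}.
\]

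Finally I would translate the hypothesis $k \leq \log_4 \log_4(n^{0.5-\epsilon})$ into quantitative bounds: it forces $2^{2k} \leq \log_4(n^{0.5-\epsilon}) = \tfrac{0.5-\epsilon}{\ln 4}\ln n$, so $a \leq \binom{2k}{k} \leq 4^k$ gives $e^{2a} \leq n^{(1-2\epsilon)/\ln 4}$, an exponent strictly less than $1$ since $\ln 4 > 1$ and $\epsilon > 0$. Also $\log v \leq k \log n$ is polylogarithmic in $n$. Therefore
\[
\frac{v_{pair}\, e^{2a} \log v}{v} \;\leq\; \frac{k}{n-k}\cdot n^{(1-2\epsilon)/\ln 4} \cdot k \log n \;\longrightarrow\; 0
\]
as $n \to \infty$, which is exactly $v_{pair} = o\!\bigl(v / (e^{2a}\log v)\bigr)$.

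The only delicate step is identifying the maximizer $s = k-1$ for $v(B,D)$ and verifying the algebraic simplification $v_{pair}/v = k/(n-k)$; everything after that is routine asymptotic bookkeeping that merely exploits the fact that the hypothesis on $k$ makes $e^{2a}$ a sub-linear power of $n$ while $\log v$ stays polylogarithmic.
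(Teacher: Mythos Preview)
Your proposal is correct and follows essentially the same route as the paper's proof: compute $v=\binom{n-k}{k}$, $a=\binom{2k}{k}$, identify $v_{pair}=\binom{n-k-1}{k-1}$ via the containment $B(+1)\cup D(+1)\subseteq A$, reduce $v_{pair}/v$ to $k/(n-k)$, and finish by bounding $e^{2a}$ and $\log v$. The paper arrives at the same $v_{pair}$ through a slightly more elaborate case analysis (decomposing an $r$-set $S$ into four pieces $i_B,i_D,j_{BD},j_{\overline{BD}}$ and writing a general summation for $v(B,D)$ before specializing to $r=2k$), whereas you cut straight to the containment observation; your bound $e^{2a}\le n^{(1-2\epsilon)/\ln 4}$ is also a touch sharper than the paper's $n^{1-2\epsilon}$, but both suffice.
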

	
	\begin{proof}
	In order to prove the lemma, it is important to note that 
	$v(B,D)$ depends intrinsically on the cardinality of $B(+1) \cap D(+1)$.
	Let $S$ be some $r$-sized subset of $[n]$. Let $i_B=S \cap (B(+1) \setminus D(+1))$,
	$i_D=S \cap (D(+1) \setminus B(+1))$, $j_{BD}=S \cap (B(+1) \cap D(+1))$ and $j_{\overline{BD}}=S \cap ([n] \setminus (B(+1) \cup D(+1))$ (see Figure \ref{fig:1chap3}). 
	So, $S=i_B \cup i_D  \cup j_{BD} \cup j_{\overline{BD}}$.
	If $S$ is an unbiased representative for $B$, then  
	$|i_B|+|j_{BD}|=|i_D|+|j_{\overline{BD}}|=\frac{r}{2}$. If $S$ is an unbiased representative of $D$,
	then $|i_D|+|j_{BD}|=|i_B|+|j_{\overline{BD}}|=\frac{r}{2}$. 
	Therefore, if $S$ is an unbiased representative of both $B$ and $D$, then 
	(i) $|i_B|=|i_D|$ ($=i$, say),
	(ii) $|j_{BD}|=|j_{\overline{BD}}|$ ($=j$, say), and
	(iii) $2i+2j=r=2k$.
	Let $x=|B(+1) \cap D(+1)|$.
	We have,
	\begin{align}
	v(B,D) = \sum_{\substack{i,j:j \leq x,\\ i \leq k-x,\\ i+j = \frac{r}{2}}} \binom{x}{j} \binom{n-2k+x}{j} \left(\binom{k-x}{i} \right)^2.  
	\label{eq:tightnibblechap3}
	\end{align}
	
	\begin{figure}
		\centering
		\includegraphics*[scale=0.8]{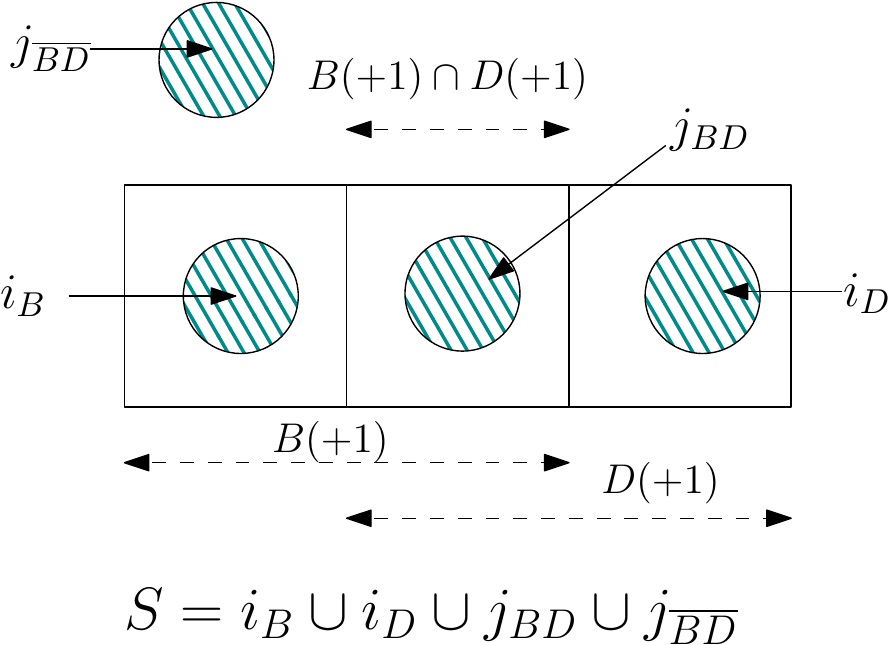}
		\caption{
			$S$ is some $r$-sized subset of $[n]$. Let $i_B=S \cap (B(+1) \setminus D(+1))$,
			$i_D=S \cap (D(+1) \setminus B(+1))$, $j_{BD}=S \cap (B(+1) \cap D(+1))$ and $j_{\overline{BD}}=S \cap ([n] \setminus (B(+1) \cup D(+1))$. So, $S=i_B \cup i_D  \cup j_{BD} \cup j_{\overline{BD}}$.
			If $S$ is an unbiased representative for $B$, then  
			$|i_B|+|j_{BD}|=|i_D|+|j_{\overline{BD}}|$. If $S$ is an unbiased representative of $D$,
			then $|i_D|+|j_{BD}|=|i_B|+|j_{\overline{BD}}|$. So, if $S$ is an unbiased representative of both $B$ and $D$, then $|i_B|=|i_D|$ and $|j_{BD}|=|j_{\overline{BD}}|$.}
		\label{fig:1chap3}
	\end{figure}
	
	Since $|B(+1)|=|D(+1)|=k$, applying Condition (iii), we get $x=j$  and $k-x=i$.
	In other words, if $S$ is an unbiased representative of cardinality $r=2k$ for both the $k$-bicolorings $B$ and $D$,
	$B \cup D \subset S$.
	So, for any pair $B,D$ of $k$-bicolorings, exactly one term in the summation of Equation \ref{eq:tightnibblechap3} remains valid, namely
	$\binom{x}{x} \binom{n-2k+x}{x} \left(\binom{k-x}{k-x} \right)^2$.
	For instance, when $x=k-1$, $v(B,D) = \binom{n-k-1}{k-1}$; when $x=k-2$, $v(B,D) = \binom{n-k-2}{k-2}$, etc.
	Therefore, $\frac{v(B,D)}{v(B',D')}= \Omega(\frac{n}{k})$ if $|B(+1) \cap D(+1)|=k-1$ and $|B'(+1) \cap D'(+1)| \leq k-2$.
	So, $v_{pair}=v(B,D)$, when $|B(+1) \cap D(+1)|=k-1$ provided $r=2k$. 
	Thus, $v_{pair} = \binom{n-k-1}{\frac{r}{2}-1}$, when $r=2k$.
	Computing $\frac{v_{pair}}{v}$, 
	\begin{align}
	\frac{v_{pair}}{v}=
	\frac{\binom{n-k-1}{\frac{r}{2}-1}}{\binom{k}{\frac{r}{2}} \binom{n-k}{\frac{r}{2}}}
	=\frac{r}{2(n-k)}.
	\end{align}	
	
	Note that $\log v=O(r \log n)$, $e^{2a} \leq n^{1-2\epsilon}$ since $k \leq \log_4 \log_4 (n^{0.5-\epsilon})$.
	So, $\frac{v_{pair}e^{2a}\log v}{v} = O(\frac{r^2 \log n}{n^{2\epsilon}}) \rightarrow 0$,
	when $n \rightarrow \infty$.
	\qed
	\end{proof}
	
	\subsubsection*{Proof of Theorem \ref{thm:nkrtightchap3}}
	
	\begin{statement}[Statement of Theorem \ref{thm:nkrtightchap3}] For sufficiently large values of $n$,
\[\frac{\binom{n}{k}} {\binom{2k}{k}} \leq \gamma(n,k,2k) \leq \frac{\binom{n}{k}} {\binom{2k}{k}}(1+o(1)), \]
	provided $k \leq \log_4 \log_4 (n^{0.5-\epsilon})$, for any $0 < \epsilon < 0.5$.
	\end{statement}
	\begin{proof}
From Lemma \ref{lemma:nkrtightchap3}, and using the result of Alon et al. \cite[Corollary 1.3]{Alon2003} to obtain coverings, 
the proof follows.
\qed
	\end{proof}

	
%

	\subsection{$\gamma(n,k,r)$ when $k=n/2$}
	
	Let $\mathcal{B}$ denote the set of all $\binom{n}{\frac{n}{2}}$ distinct $\frac{n}{2}$-bicolorings.
	It is not hard to see that $\mathcal{A}=\{\{1,2\},\{1,3\},\allowbreak \ldots, \{1,\frac{n}{2}+1\}\}$
	is a SUR of cardinality $\frac{n}{2}$ for $\mathcal{B}$.
	Together with Proposition \ref{prop:nkr1chap3}, this establishes 
	$\frac{n}{4} \leq \gamma(n,\frac{n}{2},2) \leq \frac{n}{2}$.
	It is easy to see that $\gamma(n,\frac{n}{2},n) = 1$.
	For arbitrary values of $r$, from Theorem \ref{thm:nkr1chap3} and Proposition \ref{prop:nkr1chap3}, we have,
	\begin{align}
\max\left(\left\lceil\frac{n}{2r}\right\rceil, c_1\sqrt{\frac{r(n-r)}{n}}\right) \leq \gamma(n,\frac{n}{2},r) \leq c_2n\sqrt{\frac{r(n-r)}{n}} \text{, where $c_1$ and $c_2$ are constants.}
\label{ineq:nkrremotechap3}
	\end{align}
	When $r=\frac{n}{2}$, this establishes a lower bound and upper bound of  $\Omega(\sqrt{n})$ and  $O(n\sqrt{n})$,
	respectively. In general, when $r=f(n)$ is an increasing function in $n$, this establishes sub-linear lower bounds for 
	$\gamma(n,\frac{n}{2},r)$.
	
	%
	%

	We use an extension of a theorem of Frankl and R\"{o}dl \cite{frankl1987}
	given by Keevash and Long \cite{keevash2017frankl} to obtain a linear lower bound on $\gamma(n,k,r)$ under certain restrictions
	on $k$ and $r$. Let $\mathcal{D} \subseteq [q]^n$
	be a $q$-ary code. For any $x,y \in \mathcal{D}$, the Hamming distance between
	$x$ and $y$ is the number of indices where $x(i)\neq y(i)$, for $1 \leq i \leq n$.
	The code $\mathcal{D}$ is called \textit{$d$-avoiding} if
	the Hamming distance between no pair of code-words in $\mathcal{D}$ is $d$. The following upper bound for {$d$-avoiding} codes is given in \cite{keevash2017frankl}.
	
	\begin{theorem}\cite{keevash2017frankl}
		Let $\mathcal{D} \subseteq [q]^n$ and let $\epsilon$ satisfy $0 < \epsilon < \frac{1}{2}$.
		Suppose that $\epsilon n < d < (1-\epsilon)n$ and $d$ is even if $q =2$. If $\mathcal{D}$ is $d$-avoiding,
		then $|\mathcal{D}| \leq q^{(1-\delta)n}$, for some positive constant $\delta=\delta(\epsilon)$.
		\label{thm:keeveshchap3}
	\end{theorem}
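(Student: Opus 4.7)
The plan is to prove the upper bound by the spectral method applied to the distance-$d$ Cayley graph on $[q]^n$. Identify $[q]$ with $\mathbb{Z}_q$ and let $\Gamma = \Gamma(n,q,d)$ be the graph on vertex set $\mathbb{Z}_q^n$ in which $x$ and $y$ are adjacent iff their Hamming distance is exactly $d$. Because a $d$-avoiding code is precisely an independent set of $\Gamma$, it suffices to show $\alpha(\Gamma) \leq q^{(1-\delta)n}$ for some $\delta = \delta(\epsilon) > 0$. Since $\Gamma$ is a normal Cayley graph on $\mathbb{Z}_q^n$ with connection set $S_d = \{x : |\mathrm{supp}(x)| = d\}$, it is vertex-transitive and simultaneously diagonalisable by the characters of $\mathbb{Z}_q^n$, so its spectrum can be written down explicitly.

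The eigenvalues of $\Gamma$ are the $q$-ary Krawtchouk numbers $K_d^{(q)}(j)$ for $j = 0,1,\ldots,n$, with multiplicity $\binom{n}{j}(q-1)^j$; in particular the top eigenvalue is the degree $\lambda_0 = \binom{n}{d}(q-1)^d$. I would then invoke the Hoffman ratio bound,
\[
\alpha(\Gamma) \;\leq\; q^n \cdot \frac{-\lambda_{\min}(\Gamma)}{\lambda_0 - \lambda_{\min}(\Gamma)},
\]
which reduces the combinatorial target to the purely analytic statement that $-\lambda_{\min}/(\lambda_0-\lambda_{\min}) \leq q^{-\delta n}$ whenever $\epsilon n < d < (1-\epsilon)n$ (and $d$ is even when $q=2$). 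All of the nontrivial content of the theorem is packaged into this one inequality.

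The main obstacle is this Krawtchouk estimate: one needs a uniform exponential upper bound $|K_d^{(q)}(j)| \leq q^{-\delta n}\lambda_0$ for every $j \in \{1,\ldots,n\}$ when $d/n$ lies in $[\epsilon,1-\epsilon]$. I would attack it through the generating identity $\sum_{d} K_d^{(q)}(j)\, z^d = (1-z)^j(1+(q-1)z)^{n-j}$ combined with a saddle-point / Laplace analysis at the critical point determined by $d/n$, comparing the exponential rate at a generic $j$ to the rate at $j=0$. A strictly positive entropy-type gap, uniform on the compact range $[\epsilon,1-\epsilon]$, yields the required $\delta(\epsilon)$. This is also where the parity hypothesis for $q=2$ enters: if $q=2$ and $d$ is odd, then weight-parity $2$-colours $\Gamma$, making it bipartite, so $\lambda_{\min} = -\lambda_0$ and Hoffman's bound degenerates to the trivial $2^{n-1}$; excluding odd $d$ ensures a quantitative separation from the bipartite extreme, and this is precisely the quantity that controls $\delta$. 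Substituting the Krawtchouk estimate into the Hoffman bound yields $|\mathcal{D}| \leq \alpha(\Gamma) \leq q^{(1-\delta)n}$, as required.
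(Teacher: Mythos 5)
This statement is not proved in the paper at all: it is imported verbatim from Keevash and Long \cite{keevash2017frankl}, so there is no in-paper argument to compare against. Judged on its own merits, your proposal has a genuine, fatal gap. The setup is fine --- the distance-$d$ graph on $\mathbb{Z}_q^n$ is a Cayley graph of the Hamming scheme, its eigenvalues are the $q$-ary Krawtchouk numbers $K_d^{(q)}(j)$ with multiplicities $\binom{n}{j}(q-1)^j$, and Hoffman's ratio bound applies. The problem is that the ``purely analytic statement'' you reduce everything to is false. Your blanket claim $|K_d^{(q)}(j)|\le q^{-\delta n}\lambda_0$ for all $j\ge 1$ already fails at $j=1$, where $K_d^{(q)}(1)/\lambda_0=1-\tfrac{d}{n}\cdot\tfrac{q}{q-1}$ is of constant order, and at $j=n$ for $q=2$, where $K_d(n)=(-1)^d\binom{n}{d}$. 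Those eigenvalues happen to be positive in the relevant regime, so one could hope the weaker statement about $\lambda_{\min}$ survives --- but it does not. Take $q=2$ and $d=n/2$ with $4\mid n$, which satisfies every hypothesis of the theorem. Then
\[
K_{n/2}(2)=\binom{n-2}{n/2}-2\binom{n-2}{n/2-1}+\binom{n-2}{n/2-2}=-\frac{1}{n-1}\binom{n}{n/2},
\]
so $-\lambda_{\min}\ge \lambda_0/(n-1)$ and the Hoffman bound can certify nothing better than $\alpha(\Gamma)\le 2^n\cdot\frac{1/(n-1)}{1+1/(n-1)}=2^n/n$. That is only polynomially, not exponentially, below the trivial bound, so no choice of $\delta(\epsilon)$ makes the plan work on the full range $\epsilon n<d<(1-\epsilon)n$.

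This is not a repairable technicality but a known barrier: the distance-$d$ Hamming graph is edge-transitive, so its Lov\'asz theta function coincides with the Hoffman ratio bound, and Frankl--R\"odl graphs are used precisely as integrality-gap instances for vertex-cover/independent-set SDPs because $\vartheta$ is $2^{n-o(n)}$ (or $2^n/\mathrm{poly}(n)$) while the true independence number is exponentially smaller. In other words, the Frankl--R\"odl phenomenon provably cannot be captured by the spectral/ratio-bound method you propose; any correct proof (Frankl and R\"odl's original combinatorial induction with a supersaturation step, or Keevash and Long's probabilistic argument) must use information beyond the spectrum. Your observation about why odd $d$ must be excluded for $q=2$ (bipartiteness via weight parity, and the even-weight code of size $2^{n-1}$) is correct, but it is the only part of the plan that withstands scrutiny.
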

	
We have the following lower bound for $\gamma(n,k,r)$, when $r=2c$ 
for any odd integer $c \in \{1,\ldots,\frac{n}{2}\}$ and $\epsilon n < k < (1-\epsilon)n$, for some $0 < \epsilon < 0.5$.

\subsubsection*{Proof of Theorem \ref{thm:keevashchap3}}
\begin{statement}[Statement of Theorem \ref{thm:keevashchap3}]
	Let $r=2c$
	for any odd integer $c \in \{1,\ldots,\frac{n}{2}\}$. 
	Let $k$ be an even integer, where $\epsilon n < k < (1-\epsilon)n$ for some $0 < \epsilon < 0.5$.
	Then, $\gamma(n,k,r) \geq \delta n$, 	
	where $\delta=\delta(\epsilon)$ is some real positive constant.
\end{statement}

\begin{proof}
	Let $\mathcal{B}=\{B_1,\ldots,B_{\binom{n}{k}}\}$ denote the set of all the bicolorings of $[n]$
	consisting of exactly $k$ +1's.
	We construct a family  $\mathcal{C}=\{C_1,\ldots,C_{\binom{n}{k}}\}$, where $C_i$
	corresponds to the +1 colored points of $B_i \in \mathcal{B}$.
	Let $\mathcal{A}$ be a SUR for $\mathcal{B}$, where each $A \in \mathcal{A}$ has cardinality exactly $2c$ for some odd number $c \in [n]$.
	Note that $\left\langle Y_{B_i},X_A\right\rangle=0$ implies
	that $\left\langle X_{C_i},X_A\right\rangle=c$, where
	$X_{C_i}$ denotes the 0-1 incidence vector corresponding to the set $C_i$.
	Let $V \subset \{0,1\}^n$ denote the vector space spanned by the vectors $X_A$'s,
	$A \in \mathcal{A}$, over $\mathbb{F}_2$.
	Let $V^{\perp} \subset \{0,1\}^n$ denote the subspace orthogonal to $V$.
	Since $\mathcal{A}$ is a SUR for $\mathcal{B}$, it follows that for every $C_i$, there exists
	a set $A \in \mathcal{A}$ such that $\left\langle X_{C_i},X_A\right\rangle=1 (\mod 2)$ (since $c$ is odd).
	Therefore, $X_{C_i} \not\in V^{\perp}$, for all $X_{C_i} \in \mathcal{C}=\binom{[n]}{k}$.
	In other words, $V^{\perp}$ does not contain any vector consisting of exactly $k$ ones.
	Moreover, observe that for any $x,y \in V^{\perp} $,
	the number of ones in $x+y$ is same as the Hamming distance between $x$ and $y$.
	Thus, $V^{\perp}$ is $k$-avoiding.
	Since $\epsilon n < k < (1-\epsilon) n$ and $k$ is even, from Theorem \ref{thm:keeveshchap3}, it follows that there exists a positive constant $\delta=\delta(\epsilon)$ such that $|V^{\perp}| \leq 2^{n(1-\delta)}$.
	So, dimension of $V^{\perp}$ is at most $n(1-\delta)$.
	Therefore, it follows that dimension of $V$ is at least $\delta n$.
	\qed
\end{proof}

\begin{corollary}
	$\gamma(n,\frac{n}{2},r) \geq \delta n$ provided $\frac{n}{2}$ is even and $\frac{r}{2}$ is odd,
	for some $0<\delta<1$. 
	\label{cor:keevashchap3}
\end{corollary}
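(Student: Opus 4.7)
The plan is to simply specialize Theorem \ref{thm:keevashchap3} by setting $k=n/2$. The theorem requires three hypotheses on the pair $(k,r)$: (i) $k$ is an even integer, (ii) $\epsilon n < k < (1-\epsilon)n$ for some $0 < \epsilon < 1/2$, and (iii) $r = 2c$ with $c$ odd. The corollary's assumption that $n/2$ is even immediately yields (i) with $k = n/2$; picking any fixed $\epsilon < 1/2$ (for instance $\epsilon = 1/4$) gives (ii) since $n/2$ lies exactly at the midpoint of $[\epsilon n, (1-\epsilon)n]$; and the assumption that $r/2$ is odd is precisely (iii).

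Applying Theorem \ref{thm:keevashchap3} with these parameters then produces a constant $\delta = \delta(\epsilon) > 0$ such that $\gamma(n, n/2, r) \geq \delta n$, which is exactly the statement of the corollary. No further argument is needed, and there is no genuine obstacle: the entire content of the corollary is the observation that the midpoint value $k = n/2$ satisfies the symmetric range condition of Theorem \ref{thm:keevashchap3} for any $\epsilon \in (0, 1/2)$, so the parity hypothesis on $n/2$ is the only substantive requirement beyond those of the theorem itself.
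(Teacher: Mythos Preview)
Your proposal is correct and matches the paper's approach: the corollary is stated immediately after Theorem~\ref{thm:keevashchap3} without a separate proof, so the intended derivation is precisely the direct specialization $k=n/2$ that you carry out.
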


Let $\frac{n}{2}$ be even and $\frac{r}{2}$ be odd. From Inequality \ref{ineq:nkrremotechap3}, we have $\gamma(n,\frac{n}{2},r) \in O(n\sqrt{r})$. When $r$ is a constant, using Corollary \ref{cor:keevashchap3}, this upper bound is asymptotically tight.
%
However, for larger values of  $r$, there can be a large gap (up to $O(\sqrt{n})$ when $r \in \Omega(n)$) between the upper and the lower bound. In what follows, we address the problem for a special case when $r=\frac{n}{2}$ and establish a better upper bound of $\frac{n}{2}$  on $\gamma(n,\frac{n}{2},\frac{n}{2})$.

\begin{lemma}
	$\gamma(n,\frac{n}{2},\frac{n}{2}) \leq \frac{n}{2}$, where $\frac{n}{2}$ is any even integer.
	\label{lemma:nn2upchap3}
\end{lemma}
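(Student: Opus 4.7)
The plan is to exhibit an explicit family $\mathcal{A}$ of $n/2$ subsets of $[n]$, each of size $n/2$, and show via a discrete intermediate value argument that every $(n/2)$-bicoloring $B$ has an unbiased representative in $\mathcal{A}$. Writing $m=n/2$ (which is even, so $n\equiv 0 \pmod 4$), an unbiased representative $A$ of size $m$ for $B$ is precisely a set with $|A\cap B(+1)|=m/2=n/4$.

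The construction I would use is the family of consecutive ``windows'' on the cyclic ordering of $[n]$: for $i=1,\ldots,m$, define
\[
A_i=\{i,\,i+1,\,\ldots,\,i+m-1\}\pmod n,
\]
so that $|A_i|=m$ and $|\mathcal{A}|=m=n/2$. The key observation is that $A_{i+1}$ is obtained from $A_i$ by removing one element and adding one element, so for any bicoloring $B$ the function $f(i):=|A_i\cap B(+1)|$ satisfies $|f(i+1)-f(i)|\le 1$. Extending the definition to $i=m+1$, we note that $A_{m+1}=[n]\setminus A_1$, and since $|B(+1)|=m$ this gives $f(m+1)=m-f(1)$. Hence $f(1)$ and $f(m+1)$ lie on opposite sides of $m/2=n/4$ unless they both equal $n/4$.

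The proof then splits into two easy cases. If $f(1)=n/4$, then $A_1$ itself is already an unbiased representative. Otherwise $f(1)$ and $f(m+1)=m-f(1)$ are strictly on opposite sides of $n/4$, so by the discrete intermediate value property of a function with unit-bounded increments, there exists some $i\in\{2,\ldots,m\}$ with $f(i)=n/4$ (the endpoint $i=m+1$ is excluded because $f(m+1)\ne n/4$). Either way, some $A_i\in\mathcal{A}$ satisfies $\langle X_{A_i},Y_B\rangle=0$, proving that $\mathcal{A}$ is a SUR for the collection of all $(n/2)$-bicolorings and establishing the bound $\gamma(n,n/2,n/2)\le n/2$.

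There is essentially no obstacle here: the only step that needs a little care is bookkeeping modulo $n$ so that the cyclic windows are well defined and the identity $A_{m+1}=[n]\setminus A_1$ is used correctly. The main conceptual content is simply the observation that sliding a window of size $n/2$ around a cycle changes the ``+1 count'' by at most $1$ per step and flips to its complement after exactly $n/2$ steps, forcing the intermediate value $n/4$ to be attained.
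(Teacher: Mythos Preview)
Your proposal is correct and is essentially the same proof as the paper's. The paper uses exactly the same family of consecutive windows $A_i=\{i,\ldots,i+n/2-1\}$ for $i=1,\ldots,n/2$ (note that for these indices no cyclic wraparound actually occurs, so your ``cyclic'' framing coincides with the paper's linear one), and likewise proves the result by a discrete intermediate value argument; the only cosmetic difference is that the paper works with $c_i(B)=\langle Y_B,X_{A_i}\rangle$ and shows $c_1(B)c_{n/2}(B)\le 0$ directly from $\sum_j B(j)=0$, whereas you introduce the auxiliary window $A_{m+1}=[n]\setminus A_1$ to make the same sign change transparent.
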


\begin{proof}
	Let $\mathcal{B}$ denote the set of all the bicolorings with equal number of +1's and -1's.
	Let $A_1=\{1,2,\ldots,\frac{n}{2}\},
	A_2=\{2,3,\ldots,\frac{n}{2}+1\},\ldots,
	A_{\frac{n}{2}}=\{\frac{n}{2},\frac{n}{2}+1,\ldots,n-1\}$.
	Let $c_i(B)=\left\langle Y_B,X_{A_{i}}\right\rangle$.
	For any $B \in \mathcal{B}$, it is not hard to see that
	each $c_i(B)$ is even and $|c_i(B) - c_{i+1}(B)|  \in \{0,2\}$.
	Since the bicolorings consist of equal number of +1's and -1's,
	$c_{\frac{n}{2}}(B) \leq -c_1(B)+2$ if $c_1(B) \geq 0$,
	and $c_{\frac{n}{2}}(B) \geq -c_1(B)-2$ if $c_1(B) < 0$.
	In particular, we have $c_1(B)c_{\frac{n}{2}}(B) \leq 0$.
	Since $|c_i(B) - c_{i+1}(B)|  \in \{0,2\}$, this implies the existence of an index $i$ such that 
	$c_i(B)=\left\langle Y_B,X_{A_{i}} \right\rangle=0$.
	This concludes the proof that $\gamma(n,\frac{n}{2},\frac{n}{2}) \leq \frac{n}{2}$.
	\qed
\end{proof}

From Corollary \ref{cor:keevashchap3} and Lemma \ref{lemma:nn2upchap3}, we have the following theorem.

\begin{theorem}
	$\gamma(n,\frac{n}{2},\frac{n}{2}) \leq \frac{n}{2}$.
	Moreover, $\gamma(n,\frac{n}{2},\frac{n}{2}) \geq \delta n$
	if $n/2$ is even and $n/4$ is odd, for some $0<\delta<1$. 
\end{theorem}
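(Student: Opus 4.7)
The final theorem is essentially a straightforward combination of two results already established just before its statement, so the plan is more about bookkeeping than about new ideas. The plan is to verify that the hypotheses of Lemma \ref{lemma:nn2upchap3} and Corollary \ref{cor:keevashchap3} are simultaneously satisfied under the assumption ``$n/2$ is even and $n/4$ is odd,'' and then simply quote both bounds.

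First, I would dispense with the upper bound. Lemma \ref{lemma:nn2upchap3} gives $\gamma(n,\tfrac{n}{2},\tfrac{n}{2}) \leq \tfrac{n}{2}$ whenever $\tfrac{n}{2}$ is an even integer; this hypothesis is already part of the assumption of the theorem, so the upper bound is immediate. Note that this upper bound does not actually need the stronger condition that $n/4$ be odd: it holds under the single hypothesis that $n/2$ is even, which matches the way the statement is phrased (the upper bound is asserted unconditionally, and the lower bound is asserted only under the stronger parity hypothesis).

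Next I would derive the lower bound by specializing Corollary \ref{cor:keevashchap3} to $r = \tfrac{n}{2}$. Corollary \ref{cor:keevashchap3} states that $\gamma(n,\tfrac{n}{2},r) \geq \delta n$ provided $\tfrac{n}{2}$ is even and $\tfrac{r}{2}$ is odd. Substituting $r = \tfrac{n}{2}$, the condition ``$\tfrac{r}{2}$ odd'' becomes ``$\tfrac{n}{4}$ odd,'' exactly the parity hypothesis of the theorem. Hence $\gamma(n,\tfrac{n}{2},\tfrac{n}{2}) \geq \delta n$ for some constant $\delta = \delta(\epsilon) > 0$ (one may take $\epsilon$ to be any constant in $(0,1/2)$, for instance $\epsilon = 1/4$, since $k = n/2$ lies in $(\epsilon n,(1-\epsilon)n)$ trivially).

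Since there is no genuine obstacle, the only thing that needs care is the translation of parities: the theorem's ``$n/4$ odd'' hypothesis is exactly what guarantees that $r/2 = n/4$ is odd when we apply Corollary \ref{cor:keevashchap3} with $r = n/2$, and $n/2$ even is precisely what both the upper and the lower bound require. Combining the two displayed inequalities yields the statement of the theorem.
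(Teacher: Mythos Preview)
Your proposal is correct and matches the paper's own argument exactly: the theorem is obtained simply by combining Lemma \ref{lemma:nn2upchap3} (for the upper bound) with Corollary \ref{cor:keevashchap3} specialized to $r=n/2$ (for the lower bound), after checking that the parity hypotheses line up. One small clarification: the upper bound is not truly unconditional, since $\gamma(n,\tfrac{n}{2},\tfrac{n}{2})$ only makes sense when $r=\tfrac{n}{2}$ is even (the paper assumes throughout that sets in a SUR have even cardinality), so the hypothesis of Lemma \ref{lemma:nn2upchap3} is implicitly in force already.
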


		\section{Inapproximability of the SUR problem}
		\label{sec:3chap3}
		Firstly, we establish a hardness result of the hitting set problem for a special family of subsets.
	
	\begin{defn}
		A family $\mathcal{F}$ of subsets of $[n]$ is \emph{complement closed on $[n]$} if for all $F \in \mathcal{F}$,
		$[n] \setminus F \in \mathcal{F}.$
	\end{defn}

	\begin{proposition}
		Let $n$ be an integer, $n \geq 4$. 
		No deterministic polynomial time algorithm can 
		approximate the hitting set problem for complement closed families on $[n]$
		to within a factor of $(1-\Omega(1))\frac{\ln n}{2.34}$ of the optimal, unless P=NP.
		\label{prop:specialchap3}
	\end{proposition}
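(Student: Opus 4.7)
The plan is to reduce the general hitting set problem to the complement-closed version, and then invoke the Dinur and Steurer tight inapproximability result: it is NP-hard to approximate hitting set on an $m$-element ground set within any factor better than $(1-\Omega(1))\ln m$. Given an arbitrary hitting set instance $(\mathcal{F},[m])$, I would construct a complement-closed instance $(\mathcal{F}',[n])$ with $n=2m$ as follows. Write $[n] = [m] \cup [m]'$, the disjoint union of $[m]$ with a fresh copy $[m]'$. For every $F\in\mathcal{F}$, place both $F$ (viewed inside $[m]\subseteq [n]$) and its $[n]$-complement $[n]\setminus F = ([m]\setminus F)\cup [m]'$ into $\mathcal{F}'$. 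By construction, $\mathcal{F}'$ is complement-closed on $[n]$.

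Next I would check that optima match up to an additive constant. If $H\subseteq [m]$ hits $\mathcal{F}$ and $z\in [m]'$ is arbitrary, then $H\cup\{z\}$ hits every member of $\mathcal{F}'$: each $F\in\mathcal{F}$ is hit by $H$, and each $[n]\setminus F$ is hit by $z$ since $z\in [m]'\subseteq [n]\setminus F$. Conversely, if $H'$ hits $\mathcal{F}'$ then $H'\cap [m]$ already hits every $F\in\mathcal{F}$, because $F\subseteq [m]$ forces $H'\cap F \subseteq H'\cap [m]$. Therefore
\[
\mathrm{opt}(\mathcal{F}) \;\le\; \mathrm{opt}(\mathcal{F}') \;\le\; \mathrm{opt}(\mathcal{F})+1.
\]

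Finally, I would transfer the hardness. Suppose for contradiction that a polynomial-time algorithm approximates complement-closed hitting set on $[n]$ within a factor $\rho(n) = (1-\Omega(1))\ln n/2.34$. Running it on the reduced instance and then restricting the output to $[m]$ yields a hitting set for $\mathcal{F}$ of size at most $\rho(2m)\cdot(\mathrm{opt}(\mathcal{F})+1)$. Since $\ln(2m) = (1+o(1))\ln m$, and since on the Dinur and Steurer hard instance families $\mathrm{opt}(\mathcal{F})$ grows unboundedly with $m$ (so the additive $+1$ gets absorbed into the $(1-\Omega(1))$ factor), this would be a strictly better than $(1-\Omega(1))\ln m$ approximation for general hitting set, contradicting Dinur and Steurer. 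The main obstacle I anticipate is precisely this absorption step: one has to be explicit about picking an instance family on which the optimum tends to infinity, so that the additive slack in the opt-relation does not vitiate the multiplicative hardness bound. Beyond that, the reduction in fact delivers inapproximability approaching factor $(1-\Omega(1))\ln n$ itself, so the weaker bound $(1-\Omega(1))\ln n/2.34$ stated in the proposition follows with considerable room to spare, leaving the constant $2.34$ as a convenient buffer for the subsequent application in Theorem~\ref{thm:inappchap3}.
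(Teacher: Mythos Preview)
Your proposal is correct and follows essentially the same reduction strategy as the paper: close the family under complements, observe that the optimum changes by at most an additive $+1$, and invoke Dinur--Steurer. The differences are minor but worth noting. The paper adds only a single fresh element $n+1$ (rather than doubling the ground set to $[2m]$), and it absorbs the additive $+1$ by the crude bound $\mathrm{opt}+1\le 2\,\mathrm{opt}$ instead of arguing that $\mathrm{opt}\to\infty$ on hard instance families. This multiplicative factor~$2$, together with the change from $\ln n$ to $\ln(n+1)$, is exactly what produces the constant $2.34$: the paper chooses it so that $\frac{\ln(n+1)}{2.34}<\frac{\ln n}{2}$ holds for all $n\ge 4$. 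Your route via doubling and asymptotic absorption is equally valid and, as you observe, actually yields the stronger bound $(1-\Omega(1))\ln n$; the paper simply trades some sharpness for a self-contained argument that avoids appealing to properties of the hard instance family.
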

	
	\begin{proof}
		For the sake of contradiction, assume that there exists an algorithm $ALG$ that approximates
		the hitting set for complement closed families on $[n]$ to within a factor of $(1-\Omega(1))\frac{\ln n}{2.34}$ of the optimal.
		We obtain a contradiction to this assumption by the following reduction from the general hitting set problem.
		
		Given a pair $(\mathcal{S}',[n])$ as input to the general hitting set problem, we extend the universe to $[n+1]$ by adding the element $n+1$. We construct $\mathcal{S}$ as follows: $\mathcal{S}=\mathcal{S}' \cup \{[n+1]\setminus S|S \in \mathcal{S}'\}$.
		Let $OPT(\mathcal{S})$ ($OPT(\mathcal{S}')$) denote an optimal solution to the hitting set problem on $\mathcal{S}$ (respectively, $\mathcal{S}'$). Let $ALG(\mathcal{S})$ denote a hitting set outputted by $ALG$ on $\mathcal{S}$ as input.
		
		Observe that 
		\begin{align}
		|OPT(\mathcal{S}')| \leq |OPT(\mathcal{S})|  \leq |OPT(\mathcal{S}')|+1 \leq  2|OPT(\mathcal{S}')|.
		\label{eq:inapprox1chap3}
		\end{align}
		From our assumption, 
		we know that $|OPT(\mathcal{S})|  \leq |ALG(\mathcal{S})| \leq (1-\Omega(1))\frac{\ln (n+1)}{2.34} |OPT(\mathcal{S})|
		<(1-\Omega(1))\frac{\ln n}{2}|OPT(\mathcal{S})|$, for $n \geq 4$.
		Note that $ALG(\mathcal{S})$ is a valid hitting set for $\mathcal{S}'$.
		So, $|OPT(\mathcal{S}')| \leq |OPT(\mathcal{S})|\leq |ALG(\mathcal{S})| \leq (1-\Omega(1))\frac{\ln n}{2} |OPT(\mathcal{S})|
		< (1-\Omega(1))\frac{\ln n}{2} 2 \cdot |OPT(\mathcal{S}')|= (1-\Omega(1))\ln n \allowbreak |OPT(\mathcal{S}')|$.
		Therefore, $ALG$ is a $(1-\Omega(1))\ln n$ factor approximation algorithm for the general hitting set problem.
		However, Dinur and Steurer \cite{dinur2014} proved that it is impossible to approximate the set cover problem
		to a factor of $(1-\Omega(1))\ln n$ of the optimal, unless P=NP.
		\qed
	\end{proof}
	

	We use Proposition \ref{prop:specialchap3} to establish the following hardness result
	for the system of unbiased representative problem.
	
\subsubsection*{Proof of Theorem \ref{thm:inappchap3}}

\begin{statement}[Statement of Theorem \ref{thm:inappchap3}]
Let $r \leq  (1-\Omega(1))\frac{\ln n}{2.34}$, where $n \geq 4$ is an integer.
Then, no deterministic polynomial time algorithm can 
approximate the system of unbiased representative problem for a family of bicolorings on $[n]$
to within a factor $(1-\Omega(1))\frac{\ln n}{2.34r}$ of the optimal 
when each set chosen in the representative family is required to have its cardinality at most $r$, unless P=NP.
\end{statement}
	
	\begin{proof}
		We prove Theorem \ref{thm:inappchap3} by a reduction from an instance of the hitting set problem on complement closed familes.
		Let $\mathcal{S}$ be a complement closed family on $[n]$.
		From $\mathcal{S}$, we construct a family $\mathcal{B}$ of bicolorings on $[n]$ in the following way:
		$\mathcal{B}=\{B| B(+1)=S, B(-1)=[n]\setminus S, S \in \mathcal{S}\}$.
		For the sake of contradiction, assume that there exists an algorithm $ALG$ that 
		approximates the system of unbiased representative problem for any family of bicolorings on $[n]$ 
		to within a factor $f$ of the optimal, where $1 \leq f \leq (1-\Omega(1))\frac{\ln n}{2.34r}$
		and each set in the SUR
		is required to have its cardinality at most $r$.
		Let $OPT_{\text{HIT}}(\mathcal{S})$ ($OPT_{\text{SUR}}(\mathcal{B})$) denote an optimal solution to the hitting set problem
		(respectively, the system of unbiased representative problem) on $\mathcal{S}$ (respectively, $\mathcal{B}$). Let $ALG(\mathcal{B})$ denote a SUR outputted by $ALG$ with $\mathcal{B}$ as its input.
		%
		%
		Then, executing $ALG$ on $\mathcal{B}$ as input, we obtain a SUR $\mathcal{A}$ for $\mathcal{B}$ such that
		(i) $2 \leq |A| \leq r$ for each $A \in \mathcal{A}$,
		(ii) $|ALG(\mathcal{B})|=|\mathcal{A}| \leq f\cdot |OPT_{\text{SUR}}(\mathcal{B})|$,
		for some $1 \leq f \leq (1-\Omega(1))\frac{\ln n}{2.34r}$.
		Let $V=\cup_{A\in \mathcal{A}} A$.
		It follows that $|V| \leq r |\mathcal{A}|$ and $V$ is a hitting set for $\mathcal{S}$.
		
		From Lemma \ref{claim:1chap3}, we know that $|OPT_{\text{SUR}}(\mathcal{B})| \leq |OPT_{\text{HIT}}(\mathcal{S})|-1$. Therefore,
		\[|OPT_{\text{HIT}}(\mathcal{S})| \leq |V| \leq r \cdot |ALG(\mathcal{B})| \leq r\cdot f\cdot |OPT_{\text{SUR}}(\mathcal{B})| < r\cdot f\cdot |OPT_{\text{HIT}}(\mathcal{S})|.\]
		So, $ALG$ is a $(r\cdot f)$-factor approximation algorithm for computing hitting set of $\mathcal{S}$.
		Since $1 \leq f \leq (1-\Omega(1))\frac{\ln n}{2.34r}$, this is a contradiction to Proposition \ref{prop:specialchap3}.
		\qed
	\end{proof}
	
	\begin{remark}
		Consider the case when the family $\mathcal{B}$ is restricted to a special family of bicolorings, where
		the number of +1's (or -1's) for each $B \in \mathcal{B}$ is exactly one, i.e. $|B(+1)|=1$ (or $|B(-1)|=1$).
		Then, the problem of system of unbiased representatives reduces to an edge cover problem \cite{white1971minimum,murty19821} on a complete graph $G$,
		where for each $B \in \mathcal{B}$, a vertex $v_{B(+1)}$ (respectively, $v_{B(-1)}$) is added to $V(G)$.
		So, this reduction makes the SUR problem polynomial time solvable for such families of bicolorings.
	\end{remark}

	\section*{Acknowledgment}
	
The authors thank Prof. Niloy Ganguly for helpful discussions on the problem.

	%
	
	{\small
		\bibliography{refer}}
	\bibliographystyle{plain}

	\appendix
%
	
%
%
	\section{Proof of induction base case of Theorem \ref{thm:cayleychap3}}\label{app:2chap3}
	\begin{theorem}\cite{riehl2003}
				Given the $n$ quadratics in $n$ variables $x_1(x_1-1),\ldots,x_n(x_n-1)$ with $2^n$
				common zeros, the maximum number of those common zeros a polynomial $P$ of
				degree $k$ can go through without going through them all is $2^n-2^{n-k}$.
				\label{thm:cayleychap3}
	\end{theorem}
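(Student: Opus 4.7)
The plan is to prove the natural base case of the downward induction on $k$, namely $k=n-1$, which asks: a polynomial $P$ of degree at most $n-1$ vanishing on at least $2^n-1$ common zeros of $\{x_i(x_i-1)\}_{i=1}^n$ must vanish on all $2^n$ of them. Note that the claimed maximum for this case is $2^n-2^{n-(n-1)} = 2^n-2$, so this is exactly the statement that a degree-$(n-1)$ polynomial cannot miss just one Boolean point.

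First, I would observe that the common zero set of $\{x_i(x_i-1)\}_{i=1}^n$ is precisely $\{0,1\}^n$, since each quadratic constrains $x_i\in\{0,1\}$. Next, I would argue by contradiction: suppose $P$ has degree at most $n-1$ and vanishes on $2^n-1$ Boolean points, missing exactly one point $(s_1,\ldots,s_n)\in\{0,1\}^n$. Applying Lemma \ref{lemma:nullchap3} with $S_i=\{0,1\}$ for each $i$, we obtain
\[
\deg(P)\ \geq\ \sum_{i=1}^{n}(|S_i|-1)\ =\ n,
\]
contradicting $\deg(P)\leq n-1$. Hence $P$ cannot miss exactly one Boolean point, so any such $P$ vanishing on at least $2^n-1$ Boolean points must vanish on the whole cube. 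This yields the upper bound $2^n-2$ for $k=n-1$.

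For tightness, I would exhibit the explicit polynomial $P(x_1,\ldots,x_n)=\prod_{i=1}^{n-1}(x_i-1)$, which has degree $n-1$ and vanishes on a Boolean point iff some $x_i=1$ for $i\leq n-1$; thus it vanishes on exactly $2^n-2$ Boolean points, the two missed points being those with $x_1=\cdots=x_{n-1}=0$ and $x_n\in\{0,1\}$.

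The main subtlety is matching the hypothesis of Lemma \ref{lemma:nullchap3}, which requires $P$ to vanish on all but one point of $S_1\times\cdots\times S_n$. This is handled because in the contradictory case $P$ vanishes on $\geq 2^n-1$ Boolean points yet not identically, forcing it to miss exactly one point, at which moment the Lemma applies verbatim. No other case analysis is needed for this base case.
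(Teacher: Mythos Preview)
Your base-case argument is correct and is essentially the paper's own proof: both identify the base case as $k=n-1$ (equivalently $n=k+1$), assume a degree-$k$ polynomial misses exactly one Boolean point, and invoke Lemma~\ref{lemma:nullchap3} with $S_i=\{0,1\}$ to force $\deg(P)\geq n$, a contradiction. The paper phrases the outer structure as induction on $n$ (deferring the inductive step to \cite{riehl2003}) rather than downward induction on $k$, but the base case and its proof coincide; your added tightness witness $\prod_{i=1}^{n-1}(x_i-1)$ is a welcome extra that the paper omits.
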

	\begin{proof}
		The proof is by induction on $n$.
		When $k=0$, we have nothing to prove.
		So, we consider all the degree $k$ polynomials $P$
		on $k+1$ variables as the base case.
		For the sake of contradiction, assume that
		$P$ is a polynomial of degree $k$ on $k+1$ variable and 	
		it misses only one common zero of $x_1(1-x_1),\ldots,x_{k+1}(1-x_{k+1})$.
		Then, using Lemma \ref{lemma:nullchap3},
		it follows that degree of $P$ must be $k+1$, which is a contradiction.
		This completes the proof of the induction base case.
		
		The rest of the proof is exactly same as given in \cite{riehl2003}.
		\qed
	\end{proof}
	
	\section{Proof of Proposition \ref{prop:nkr2chap3}} \label{app:1}
	\begin{statement}[Statement of Proposition \ref{prop:nkr2chap3}]
				$ \frac{2}{r(r-1)} \gamma(n,k-1,r-2) \leq \gamma(n,k,r) \leq (n-r+1)\gamma(n,k-1,r-2)$, for $r \geq 4$.
	\end{statement}
	\begin{proof}
		Let $\mathcal{B}_i$ denote the set of all the bicolorings consisting of exactly $i$ +1's, for $i\in \{k,k-1\}$.
		Let $\mathcal{A}_{r-2}$ denote a family of $(r-2)$-sized subsets that is an optimal unbiased representative
		family for $\mathcal{B}_{k-1}$.
		For any $A \in \mathcal{A}_{r-2}$, let $\bar{A}=[n]\setminus A= \{x_1,\ldots,x_{n-r+2}\}$. 
		For each $A \in \mathcal{A}_{r-2}$, we construct $(n-r+1)$ $r$-sized subsets as follows:
		$A^1=A \cup \{x_1,x_2\}$, $A^2=A \cup \{x_1,x_3\}$, $\cdots$, 	$A^{n-r+1}=A \cup \{x_1,x_{n-r+2}\}$.
		Let $\mathcal{A}_r= \cup_{A \in \mathcal{A}_{r-2}} \{A^1, \cdots, A^{n-r+1}\}$.
		To see that $\mathcal{A}_r$ is a system of unbiased representative
		for $\mathcal{B}_{k}$, consider any $B \in \mathcal{B}_{k}$ and a $(k-1)$-sized subset $B' \subset B_k$.
		Let $A' \in \mathcal{A}_{r-2}$ has $\left \langle Y_{B'}, X_{A'}\right \rangle=0$.
		From the construction, it follows that there is at least one  $A \in \{A'^1, \cdots, A'^{n-r+1}\}$ such that
		$\left \langle Y_{B}, X_{A}\right \rangle=0$. 
		
		For the lower bound,  consider a SUR $\mathcal{A}$ for $\mathcal{B}_{k}$ of size $\gamma(n,k,r)$.
		For each $A \in \mathcal{A}$,
		let $\mathcal{F}_A$ denote the family of $\binom{r}{r-2}$ distinct $(r-2)$-sized subsets of $A$.
		Then, $\mathcal{A}'=\cup_{A \in \mathcal{A}}F_A$ is 
		an  unbiased representative
		family for $\mathcal{B}_{k-1}$ where each set in the family is of size exactly $(r-2)$.
		\qed
	\end{proof}

\end{document}